\newtheorem{theorem}{Theorem}
\newtheorem{lemma}[theorem]{Lemma}
\newtheorem{corollary}[theorem]{Corollary}
\newtheorem{proposition}[theorem]{Proposition}
\newtheorem{example}[theorem]{Example}
\newtheorem{remark}[theorem]{Remark}
\begin{document}

\title[Kronecker coefficients for inverse semigroups]
{Kronecker coefficients for \\ (dual) symmetric inverse semigroups}
\author[V.~Mazorchuk and S.~Srivastava]{Volodymyr Mazorchuk and Shraddha Srivastava}

\begin{abstract}
We study analogues of Kronecker coefficients for symmetric 
inverse semigroups, for dual symmetric inverse semigroups and
for the inverse semigroups of bijections between subquotients 
of finite sets. In all cases we reduce the problem of 
determination of such coefficients to some group-theoretic and
combinatorial problems. For symmetric inverse semigroups,
we provide an explicit formula in terms of the classical 
Kronecker and Littlewood--Richardson coefficients for symmetric
groups.
\end{abstract}

\maketitle

\section{Introduction and description of the results}\label{s1}

Kronecker and Littlewood--Richardson coefficients are numerical 
bookkeeping tools that play important role in the representation
theory of symmetric groups and in the theory of symmetric polynomials.
While Littlewood--Richardson coefficients have transparent 
combinatorial description, to find such a description for Kronecker 
coefficients is a famous open problem, see \cite{Sa} for more details.

Symmetric groups have a number of different analogues in semigroup
theory. The most obvious one is the full transformation semigroup
on a set, that is the semigroup of all endomorphisms of a set.
There are reasons to consider this latter semigroup as ``too difficult'',
for example, in contrast with the group algebra of a (finite) symmetric 
group over the complex numbers, the semigroup algebra of 
this semigroup over the complex
numbers is not semi-simple, see \cite{St}. There are two, slightly 
less obvious but still natural, generalizations of the symmetric group 
which do have the property that the corresponding semigroup 
algebras over the complex numbers are semi-simple.
These are the symmetric inverse and the dual symmetric inverse semigroups.

The symmetric inverse semigroup on a set $X$ is the semigroup of all
bijections between subsets of $X$ under a natural notion of composition of 
such bijections, see \cite{GM}. In the theory of inverse semigroups,
it plays a role similar to the one the symmetric group plays in group theory.
It also appears naturally in other circumstances, for example, in the
theory of $FI$-modules, that is, in the representation theory 
of the category of injections between finite sets, see \cite{CEF}.

The dual symmetric inverse semigroup on a set $X$ is the semigroup of all
bijections between quotients of $X$ under a natural notion of composition of 
such bijections, see \cite{FL}. There are certain categorical reasons to 
use the word dual in this setup, see \cite{FL,KMaz1} for details,
even if there is no obvious underlying duality. This semigroup also
appears naturally in the theory of partition algebras, see \cite{KMaz},
and in the study of factor powers of symmetric groups, see \cite{Maz}.
The symmetric inverse and the dual symmetric inverse semigroups 
are connected by a Schur-Weyl duality, see \cite{KMaz}.
In \cite{JMPSSZ} one can find another transparent connection between 
the symmetric inverse and the dual symmetric inverse semigroup.

In this article, we investigate analogues of Kronecker coefficients
for both, the symmetric inverse semigroup and the dual symmetric inverse
semigroup. In both cases, the problem we address strictly contains
the original problem of determination of Kronecker coefficients for
symmetric groups. This is because the symmetric group is the group
of units (i.e. invertible elements) both for the symmetric inverse and
the dual symmetric inverse semigroups. Therefore our ambition is not 
to provide a complete combinatorial solution, which seems to be 
too difficult, but rather to reduce the problem to some
group-theoretic problem. One could argue that it is natural to 
consider a semigroup-theoretic problem ``solved'' if it is reduced to 
a group-theoretic problem.  Our interest in the Kronecker problem
for symmetric inverse semigroups originates from our attempts 
to understand the techniques developed and used in \cite{BV} 
during our work on \cite{MS}.

The result we obtain in the case of symmetric inverse semigroups
is the most complete one. In this case the formula for 
Kronecker coefficients looks as follows, see Theorem~\ref{thm1}:
\begin{displaymath}
\mathbf{k}_{\lambda,\mu}^\nu= 
\sum_{\alpha\vdash a}\sum_{\beta\vdash b}\sum_{\gamma\vdash c}
\sum_{\delta\vdash b}\sum_{\epsilon\vdash b}\sum_{\kappa\vdash a+b}
\mathbf{c}_{\alpha,\beta}^\lambda
\mathbf{c}_{\gamma,\delta}^\mu
\mathbf{g}_{\beta,\delta}^\epsilon
\mathbf{c}_{\alpha,\epsilon}^\kappa
\mathbf{c}_{\kappa,\gamma}^\nu.
\end{displaymath}
Here $\mathbf{g}_{\beta,\delta}^\epsilon$ are the classical 
Kronecker coefficients for symmetric groups and 
$\mathbf{c}_{\kappa,\gamma}^\nu$ are the classical 
Littlewood--Richardson coefficients for symmetric groups.
The case of symmetric inverse semigroups is considered
and this formula is proved in Section~\ref{s4}.
Along the way we also show that tensor product of 
cell modules for symmetric inverse semigroups have 
a cell filtration and determine the corresponding
multiplicities, see Corollary~\ref{cor3}.

In Section~\ref{s5} we treat the case of the dual symmetric
inverse semigroup. Here the result we obtain, see 
Corollary~\ref{cor-prop7}, is much less explicit and relies
on two inputs, one combinatorial and one group-theoretic.
The combinatorial input is the set of certain 
rectangular $0/1$-matrices, up to permutation of rows
and columns. To each element of this set, we associate
a group-theoretic problem in terms of induction and 
restriction between certain subgroups of symmetric groups,
see Proposition~\ref{prop7}. Our answer is given by 
adding up all solutions. There is a number of interesting 
problems which appear as special cases of our description,
see Examples~\ref{ex8}, \ref{ex9}, \ref{ex9n} and \ref{ex9nn}.
One of them is related to partition algebras and is discussed in 
more detail in Section~\ref{s7}.

In Section~\ref{s6}, we generalize the results of Section~\ref{s5}
to the inverse  semigroup $PI_n^*$ of all bijections between subquotients 
of a set with $n$ elements. Note that both $I_n^*$ and $PI_n^*$
appear as Schur-Weyl duals of the symmetric inverse semigroup,
see \cite[Theorems~1~and~2]{KMaz}. The semigroup $PI_n^*$ is, in some 
sense, a mixture between  $IS_n$ and $I_n^*$. Here our results 
are very similar to results we obtain in Section~\ref{s5}. The main
difference is that the combinatorial input for the solution becomes
more involved and consists of $0/1$-matrices with a fixed special
column and a fixed special row.

Finally, in Section~\ref{s7} we look in more detail into one specific
situation which pops up in Section~\ref{s5}, see Example~\ref{ex8}.
The original problem here is to understand restriction coefficients
for the restriction from $S_{kl}$ to $S_k\times S_l$, where the
latter is naturally embedded into the former as the group of
independent permutations of rows and columns for a Young tableau
of the shape $(k^l)$ (or of the shape $(l^k)$). We use a Schur-Weyl duality
to interpret these restriction coefficients as certain Kronecker-type
coefficients for partition algebras (with different parameters).
We use this interpretation to establish, in Theorem~\ref{s7.5.1},
a stability phenomenon for these coefficients (which is 
also obtained in \cite[Theorem~4.2]{Ryba}
by completely different methods). In the final
subsection we show that, unlike the cases we considered in the
previous sections, tensor product of cell modules over 
partition algebras  does not have a cell filtration, in general.
However, it always has a filtration by standard modules,
see Proposition~\ref{prop7.4.1}.
\vspace{1cm}

\subsection*{Acknowledgments}
The first author is partially supported by the Swedish Research Council.

\section{Preliminaries}\label{s3}

Throughout the paper, we  fix an algebraically closed field $\Bbbk$ 
of characteristic zero. For $n\in\mathbb{Z}_{\geq 0}$, we denote by
$\underline{n}$ the set $\{1,2,\dots,n\}$.  Note that
$\underline{0}:=\varnothing$, the empty set.

\subsection{Symmetric groups}\label{s3.1}

For $n\geq 0$,  consider the symmetric group $S_n$  on $\underline{n}$.
For $\lambda\vdash  n$, denote by $\mathbf{S}^\lambda$ the 
{\em Specht  $\Bbbk[S_n]$-module}
corresponding to $\lambda$. For $\lambda,\mu,\nu\vdash n$, 
we denote by $\mathbf{g}_{\lambda,\mu}^\nu$ the corresponding {\em Kronecker 
coefficient}, that is the multiplicity of $\mathbf{S}^\nu$ in 
$\mathbf{S}^\lambda\otimes_\Bbbk\mathbf{S}^\mu$. The latter is a $\Bbbk[S_n]$-module 
via the diagonal action of $S_n$.

For $\lambda\vdash n$, $\mu\vdash m$ and $\nu\vdash n+m$, 
we denote by $\mathbf{c}_{\lambda,\mu}^\nu$ the corresponding 
{\em Littlewood--Richardson coefficient}, that is the 
multiplicity of $\mathbf{S}^\nu$ in  
$\mathrm{Ind}_{\Bbbk[S_n\times S_m]}^{\Bbbk[S_{n+m}]}
(\mathbf{S}^\lambda\otimes_\Bbbk\mathbf{S}^\mu)$,  where $S_n$ is embedded into
$S_{n+m}$ using the canonical embedding of $\underline{n}$ into $\underline{n+m}$
while $S_m$ is embedded into $S_{n+m}$ via the $+n$-shift of the canonical 
embedding of $\underline{m}$ into $\underline{n+m}$.

We  refer  to \cite{Sa} for all details related to symmetric groups
and their representations.

\subsection{Induction and bimodules}\label{s3.2}

For $k+m=n$, consider the group $G=S_n\times S_k\times S_m$ and its subgroup 
$H$ given by the embedding of $S_k\times S_m$ defined as follows:
each $\sigma\in S_k$ is sent to $(\tilde{\sigma},\sigma,e)$,
where $\tilde{\cdot}:S_k\to S_n$ is the natural embedding of 
$S_k$ into $S_n$ with respect to the first $k$ elements; furthermore,
each $\sigma\in S_m$ is sent to $(\hat{\sigma},e,\sigma)$,
where $\hat{\cdot}:S_m\to S_n$ is the natural embedding of 
$S_m$ into $S_n$ with respect to the last $m$ elements.

The group $G$ acts naturally
on $G/H$ making $\Bbbk[G/H]$ into a $G$-module. We can view
this $G$-module as a $\Bbbk[S_n]$-$\Bbbk[S_k\times S_m]$-bimodule
using the canonical anti-involution $\sigma\mapsto \sigma^{-1}$
on $S_k\times S_m$. Since $\Bbbk[S_k\times S_m]\cong 
\Bbbk[S_k]\otimes_\Bbbk\Bbbk[S_m]$,
given an $S_k$-module $V$ and an $S_m$-module $W$, we have the 
$S_n$-module 
\begin{equation}\label{eq-s3.2-1}
\Bbbk[G/H]\bigotimes_{\Bbbk[S_k\times S_m]} \big(V\otimes_\Bbbk W\big).
\end{equation}
It is easy to see that, mapping $H$ to $e$, gives rise to an isomorphism
between the $\Bbbk[S_n]$-$\Bbbk[S_k\times S_m]$-bimodule
$\Bbbk[G/H]$ and the 
$\Bbbk[S_n]$-$\Bbbk[S_k\times S_m]$-bimodule $\Bbbk[S_n]$.
Therefore the $S_n$-module in \eqref{eq-s3.2-1} is isomorphic to 
$\mathrm{Ind}_{\Bbbk[S_k\times S_m]}^{\Bbbk[S_n]}\big(V\otimes_\Bbbk W\big)$.

\subsection{Tensor product and bimodules}\label{s3.3}

Consider the group $G=S_n\times S_n\times S_n$ and its subgroup 
$H$ given by the diagonal embedding of $S_n$, that is
$\sigma\mapsto (\sigma,\sigma,\sigma)$. Then $G$ acts naturally
on $G/H$ making $\Bbbk[G/H]$ into a $G$-module. We can view
this $G$-module as an $\Bbbk[S_n]$-$\Bbbk[S_n\times S_n]$-bimodule
using the canonical anti-involution $\sigma\mapsto \sigma^{-1}$
on $S_n\times S_n$ (the last two factors). As $\Bbbk[S_n\times S_n]\cong 
\Bbbk[S_n]\otimes_\Bbbk\Bbbk[S_n]$,
given two $S_n$-modules $V$ and $W$,  we have the 
$S_n$-module 
\begin{equation}\label{eq-s3.3-1}
\Bbbk[G/H]\bigotimes_{\Bbbk[S_n\times S_n]} V\otimes_\Bbbk W.
\end{equation}
It is easy to see that, mapping $H$ to $(e,e)$, gives rise to an isomorphism
between the $\Bbbk[S_n]$-$\Bbbk[S_n\times S_n]$-bimodule
$\Bbbk[G/H]$ and the 
$\Bbbk[S_n]$-$\Bbbk[S_n\times S_n]$-bimodule $\Bbbk[S_n\times S_n]$,
where the left action of $S_n$ is diagonal.
Therefore the $S_n$-module in \eqref{eq-s3.3-1}
is isomorphic to $V\otimes_\Bbbk W$ with the usual (diagonal)
action of $S_n$.

\section{Symmetric inverse semigroups}\label{s4}

\subsection{The semigroup  $IS_n$}\label{s4.1}

Consider the symmetric inverse semigroup $IS_n$ of all bijections between
subsets of $\underline{n}$. The semigroup operation in $IS_n$ is a natural 
analogue of composition for partially defined maps. Here is 
an example for $n=5$, where composition is from right to left and
the elements are presented using the two-row notation for (partial) functions:
\begin{displaymath}
\left(\begin{array}{ccc}2&3&5\\1&5&2\end{array}\right) \circ
\left(\begin{array}{cccc}1&2&4&5\\2&4&3&1\end{array}\right)=
\left(\begin{array}{cc}1&4\\1&5\end{array}\right).
\end{displaymath}

For $\sigma\in IS_n$, we denote by
$\mathrm{dom}(\sigma)$ and $\mathrm{cod}(\sigma)$ the {\em domain} and the
{\em codomain} of $\sigma$,  respectively. Recall, see for example 
\cite[Section~4.4]{GM}, the following description of Green's relations
$\mathcal{L}$, $\mathcal{R}$, $\mathcal{H}$ and $\mathcal{D}=\mathcal{J}$
for $IS_n$:
\begin{itemize}
\item $\sigma\mathcal{L}\pi$ if and only if $\mathrm{dom}(\sigma)=\mathrm{dom}(\pi)$,
\item $\sigma\mathcal{R}\pi$ if and only if $\mathrm{cod}(\sigma)=\mathrm{cod}(\pi)$,
\item $\sigma\mathcal{H}\pi$ if and only if $\mathrm{dom}(\sigma)=\mathrm{dom}(\pi)$
and $\mathrm{cod}(\sigma)=\mathrm{cod}(\pi)$,
\item $\sigma\mathcal{J}\pi$ if and only if $|\mathrm{dom}(\sigma)|=|\mathrm{dom}(\pi)|$.
\end{itemize}
The cardinality  of $\mathrm{dom}(\pi)$ is called the {\em rank} of $\pi$ and
denoted $\mathrm{rank}(\pi)$.

For a subset $X\subset\underline{n}$, we denote by $\varepsilon_X$ the identity
on $X$. Then $\{\varepsilon_X :\, X\subset\underline{n}\}$ is the set of all
idempotents in $IS_n$. For $X,Y\subset\underline{n}$, we have 
$\varepsilon_X\varepsilon_Y=\varepsilon_{X\cap Y}$.

\subsection{Cells and simple $\Bbbk[IS_n]$-modules}\label{s4.2}

Let $\mathbf{L}$ be a left cell (i.e. an $\mathcal{L}$-equivalence class) in $IS_n$.
The linearization $\Bbbk[\mathbf{L}]$ is naturally a left $\Bbbk[IS_n]$-module with the 
action given as follows, for $\sigma\in IS_n$ and $\pi\in \mathbf{L}$:
\begin{equation}\label{eq-nnn}
\sigma\cdot\pi=
\begin{cases}
\sigma\pi,& \text{ if } \sigma\pi\in \mathbf{L};\\
0,& \text{ otherwise.}
\end{cases}
\end{equation}
This is the {\em cell $\Bbbk[IS_n]$-module} corresponding to $\mathbf{L}$.

The set $\mathbf{L}$ contains a unique idempotent, namely 
$\varepsilon_X$, where $X$ is the common domain for all elements in $\mathbf{L}$. 
Let $G(\mathbf{L})$ be the corresponding maximal subgroup in $IS_n$, 
i.e. the $\mathcal{H}$-equivalence class of $\varepsilon_X$. 
Set $k:=|X|$. Then $G(\mathbf{L})=S_X\cong S_{k}$.
We have the vector space decomposition
\begin{equation}\label{eq-s4.2-1}
\Bbbk[\mathbf{L}]\cong 
\bigoplus_{Y\subset \underline{n}, |Y|=k}\varepsilon_Y \Bbbk[\mathbf{L}].
\end{equation}
The group $G(\mathbf{L})\cong S_k$ acts on $\mathbf{L}$ by multiplication on the right.
This equips $\Bbbk[\mathbf{L}]$ with the natural structure of a
$\Bbbk[IS_n]$-$\Bbbk[S_k]$-bimodule. In fact, each 
$\varepsilon_Y \Bbbk[\mathbf{L}]$ in \eqref{eq-s4.2-1} is a free right
$\Bbbk[S_k]$-module of rank one.
Up to isomorphism, the bimodule $\Bbbk[\mathbf{L}]$ does not depend on the choice of 
$\mathbf{L}$ inside its $\mathcal{J}$-cell (i.e. for the fixed $k$).

For any $\lambda\vdash k$, the $\Bbbk[IS_n]$-module
\begin{equation}\label{eq-s.4.2-mm}
\mathbf{N}^{\lambda}:=\Bbbk[\mathbf{L}]\bigotimes_{\Bbbk[S_k]}
\mathbf{S}^\lambda
\end{equation}
is simple. By the above, up to isomorphism, it does not depend on the choice of 
$\mathbf{L}$. Moreover, up to isomorphism,
each simple $\Bbbk[IS_n]$-module has such a form,  for some $k$ and $\lambda$ as above.
Putting all this together,  we see that the isomorphism classes of simple
$\Bbbk[IS_n]$-modules are in bijection with partitions $\lambda\vdash k$, where
$k\leq n$.

We refer to \cite[Chapter~11]{GM}  and \cite{St} for further details.

\subsection{Kronecker coefficients for $IS_n$}\label{s4.3}

Being a semigroup algebra, $\Bbbk[IS_n]$ has the canonical comultiplication given
by the diagonal map $\sigma\mapsto \sigma\otimes\sigma$. Therefore, for two
$\Bbbk[IS_n]$-modules $M$ and $N$, the tensor  product $M\otimes_\Bbbk N$
has the natural  structure of a $\Bbbk[IS_n]$-module.

For a fixed $n$, let $k,l,m\leq n$, $\lambda\vdash k$,  $\mu\vdash l$
and $\nu\vdash m$. We denote by $\mathbf{k}_{\lambda,\mu}^\nu$ 
the multiplicity of $\mathbf{N}^\nu$ in 
$\mathbf{N}^\lambda\otimes_\Bbbk\mathbf{N}^\mu$. It is natural to call
these $\mathbf{k}_{\lambda,\mu}^\nu$ the {\em Kronecker coefficients for
$IS_n$}. Our main result in this section is the following:

\begin{theorem}\label{thm1}
\vspace{2mm}

\begin{enumerate}[$($a$)$]
\item\label{thm1.1} If $m<\max\{k,l\}$ or $m>k+l$, then $\mathbf{k}_{\lambda,\mu}^\nu=0$.
\item\label{thm1.2} If $\max\{k,l\}\leq m\leq k+l$, set $b:=k+l-m$, $a:=k-b$ and $c:=l-b$.
Then:
\begin{equation}\label{eq2}
\mathbf{k}_{\lambda,\mu}^\nu= 
\sum_{\alpha\vdash a}\sum_{\beta\vdash b}\sum_{\gamma\vdash c}
\sum_{\delta\vdash b}\sum_{\epsilon\vdash b}\sum_{\kappa\vdash a+b}
\mathbf{c}_{\alpha,\beta}^\lambda
\mathbf{c}_{\gamma,\delta}^\mu
\mathbf{g}_{\beta,\delta}^\epsilon
\mathbf{c}_{\alpha,\epsilon}^\kappa
\mathbf{c}_{\kappa,\gamma}^\nu.
\end{equation}
\end{enumerate}
\end{theorem}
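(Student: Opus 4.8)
The starting point is the description in Section~\ref{s4.2}: the simple module $\mathbf{N}^\lambda$ is built from the cell bimodule $\Bbbk[\mathbf{L}]$ tensored with the Specht module $\mathbf{S}^\lambda$, and $\Bbbk[\mathbf{L}]$ is, as a $\Bbbk[IS_n]$-$\Bbbk[S_k]$-bimodule, a sum over $k$-subsets $Y\subseteq\underline{n}$ of free rank-one right $\Bbbk[S_k]$-modules $\varepsilon_Y\Bbbk[\mathbf{L}]$. So I would first understand the tensor product $\mathbf{N}^\lambda\otimes_\Bbbk\mathbf{N}^\mu$ as a $\Bbbk[IS_n]$-module by analyzing $\Bbbk[\mathbf{L}_k]\otimes_\Bbbk\Bbbk[\mathbf{L}_l]$ where $\mathbf{L}_k,\mathbf{L}_l$ are cells of ranks $k,l$. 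An element of $\mathbf{L}_k\times\mathbf{L}_l$ is a pair of partial bijections with codomains (say) $Y_1,Y_2$ of sizes $k,l$; the diagonal $IS_n$-action multiplies both simultaneously. The orbit structure of this action is governed by the combinatorial data of the pair $(\mathrm{cod},\mathrm{cod})$, i.e.\ by the sizes $|Y_1\cap Y_2|$, $|Y_1\setminus Y_2|$, $|Y_2\setminus Y_1|$, and $|\underline{n}\setminus(Y_1\cup Y_2)|$. The key combinatorial fact is that the product partial bijection $\sigma\pi^{-1}$-type composition forces the relevant rank to be $|Y_1\cap Y_2|$-controlled, which is why part~\eqref{thm1.1} holds: the rank of anything in the image has size at least $\max\{k,l\}-(\text{overlap deficiency})$ and at most $k+l$, pinning $m$ to the stated range, and giving the vanishing outside it.

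\emph{Reduction to induction product of bimodules.} Inside the range $\max\{k,l\}\le m\le k+l$, I would show that the ``rank-$m$ part'' of $\mathbf{N}^\lambda\otimes_\Bbbk\mathbf{N}^\mu$ — the part relevant for computing the multiplicity of $\mathbf{N}^\nu$ — is, as a $\Bbbk[IS_n]$-$\Bbbk[S_m]$-bimodule, isomorphic to $\Bbbk[\mathbf{L}_m]\otimes_{\Bbbk[S_m]}B$ for a suitable $\Bbbk[S_m]$-module $B$ depending only on $\lambda,\mu$ and the overlap parameters $a,b,c$. Concretely, fixing representative codomains, the stabilizer computation should produce $S_a\times S_b\times S_c$-type groups: the parameter $b=|Y_1\cap Y_2|$ plays the role of the ``overlap'' where both partial bijections act and their actions must be compared (this is where a Kronecker coefficient $\mathbf{g}_{\beta,\delta}^\epsilon$ enters), while $a=k-b$ and $c=l-b$ are the parts seen only by $\mathbf{N}^\lambda$ resp.\ only by $\mathbf{N}^\mu$. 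Restricting $\mathbf{S}^\lambda$ to $S_a\times S_b$ and $\mathbf{S}^\mu$ to $S_b\times S_c$ introduces the coefficients $\mathbf{c}_{\alpha,\beta}^\lambda$ and $\mathbf{c}_{\gamma,\delta}^\mu$ (via Frobenius reciprocity / the Littlewood--Richardson rule for restriction). Then one tensors the two $S_b$-pieces (giving $\mathbf{g}_{\beta,\delta}^\epsilon$), inducts the $S_a\times S_b$-piece up to $S_{a+b}$ (giving $\mathbf{c}_{\alpha,\epsilon}^\kappa$), and finally inducts the $S_{a+b}\times S_c$ module up to $S_m=S_{a+b+c}$ (giving $\mathbf{c}_{\kappa,\gamma}^\nu$), using the bimodule identities of Sections~\ref{s3.2} and~\ref{s3.3}. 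Assembling these multiplicities over all intermediate partitions yields exactly \eqref{eq2}. Using \eqref{eq-s.4.2-mm}, the multiplicity of $\mathbf{N}^\nu$ in the tensor product equals the multiplicity of $\mathbf{S}^\nu$ in the $\Bbbk[S_m]$-module $B$, which is the displayed sum.

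\emph{Execution and the main obstacle.} I would carry this out in the order: (i) establish the orbit/rank analysis of the diagonal $IS_n$-action on $\mathbf{L}_k\times\mathbf{L}_l$, proving part~\eqref{thm1.1} and identifying, for each admissible rank $m$, the relevant orbit and its point-stabilizer; (ii) identify the ``$Y$-component'' $\varepsilon_Y(\mathbf{N}^\lambda\otimes_\Bbbk\mathbf{N}^\mu)$ for a rank-$m$ idempotent $\varepsilon_Y$ as an $S_m$-module, writing it via restriction of $\mathbf{S}^\lambda\otimes$ (the Specht data of $\mu$) to the overlap subgroup and then inducing back — this is the step where the chain of coefficients appears; (iii) reassemble using the bimodule isomorphisms $\Bbbk[\mathbf{L}]\cong\Bbbk[S_m]$ (over the appropriate bimodule category) from Section~\ref{s4.2} and the induction identity from Section~\ref{s3.2}. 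The main obstacle is step~(ii): correctly bookkeeping the three types of points (``seen only by $\lambda$'', ``seen only by $\mu$'', ``seen by both'') and verifying that the diagonal $IS_n$-action on the overlap translates precisely into the \emph{diagonal} $S_b$-action there, so that a genuine Kronecker coefficient $\mathbf{g}_{\beta,\delta}^\epsilon$ — and not, say, an outer tensor — is what appears. Getting the embeddings of $S_a\times S_b$, $S_b\times S_c$, $S_{a+b}\times S_c$ into $S_m$ compatible with one another, and tracking how the free rank-one structure of $\varepsilon_Y\Bbbk[\mathbf{L}]$ interacts with all these inductions, is the delicate part; once that is pinned down, each coefficient in \eqref{eq2} is read off from a single application of Frobenius reciprocity or the definition of the Kronecker/Littlewood--Richardson coefficients, and the sum is just the composition of these multiplicity counts.
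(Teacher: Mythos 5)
Your proposal follows essentially the same route as the paper. You correctly identify (i) the reduction from simple modules to the cell-module tensor product $\Bbbk[\mathbf{L}_k]\otimes_\Bbbk\Bbbk[\mathbf{L}_l]$, (ii) the role of the overlap parameter $b=|Y_1\cap Y_2|$ and the induced trichotomy $a$/$b$/$c$, (iii) the fact that part~\eqref{thm1.1} falls out of the rank estimates on the diagonal action, and (iv) the precise chain restrict--tensor--induce that produces $\mathbf{c}_{\alpha,\beta}^\lambda\,\mathbf{c}_{\gamma,\delta}^\mu\,\mathbf{g}_{\beta,\delta}^\epsilon\,\mathbf{c}_{\alpha,\epsilon}^\kappa\,\mathbf{c}_{\kappa,\gamma}^\nu$, with the Kronecker factor coming from the diagonal $S_b$-action on the overlap. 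The step you flag as the main obstacle (pinning down the bimodule and verifying that the diagonal $IS_n$-action really induces a \emph{diagonal} $S_b$-action on the overlap) is exactly what the paper's Lemma~\ref{lem2} does, by exhibiting the relevant set $Q$ of basis tensors, computing its stabilizer $H\cong S_a\times S_b\times S_c$ inside $G=S_{a+b+c}\times S_{a+b}\times S_{b+c}$, and then matching $\Bbbk Q\cong\Bbbk[G/H]$ against the bimodule realizing the restrict--tensor--induce functor $\mathcal{F}$ via a dimension count and a stabilizer comparison; this is the concrete scaffolding your outline would need to become a proof.
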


We note that the Kronecker coefficients for $IS_n$ appear in the
theory of FI-modules, see  \cite{CEF}, especially Formula~(17) in \cite{CEF}, 
for details. For $\lambda=(1)$, the computation of $\mathbf{k}^{\nu}_{\lambda,\mu}$ 
can be obtained from \cite[Corollary~2.7]{MS21}.

\subsection{Tensor product of cell  modules}\label{s4.4}

Before we can prove Theorem~\ref{thm1}, we need to consider a simpler 
decomposition problem, namely, that for the tensor product of cell modules.
For $0\leq r\leq n$, we denote by $\mathbf{L}_r$ the left cell 
in $IS_n$ containing  $\varepsilon_{\underline{r}}$.

Let $k$, $l$ and $m$ be as in Theorem~\ref{thm1}. Consider the  $IS_n$-module
$\Bbbk[\mathbf{L}_k]\otimes_\Bbbk\Bbbk[\mathbf{L}_l]$. We are going to show
that this module has a filtration whose subquotients are isomorphic to  cell
modules and we will also determine  the multiplicity of $\Bbbk[\mathbf{L}_m]$
as a subquotient in that filtration. The fact that this multiplicity
is well-defined (i.e. does not depend on the choice of a filtration with cell
subquotients) is clear since non-isomorphic cell modules do not have any common
non-zero simple summands.

The elements in $\mathbf{L}_k$ are given by pairs $(X,\sigma)$, where 
$X$  is a cardinality $k$ subset of $\underline{n}$ and $\sigma$
is a bijection from $\underline{k}$ to $X$. Similarly, 
the elements in $\mathbf{L}_l$ are given by pairs $(Y,\pi)$, where 
$Y$  is a cardinality $l$ subset of $\underline{n}$ and $\pi$
is a bijection from $\underline{l}$ to $Y$. For such a $\xi=(X,\sigma)$, 
the rank of $\varepsilon_{\underline{m}}\xi$ equals $k$ if and only if $X\subset \underline{m}$.
Similarly, for such an $\eta=(Y,\pi)$, the rank of $\varepsilon_{\underline{m}}\eta$ 
equals $l$ if and only if $Y\subset \underline{m}$. Hence
$\varepsilon_{\underline{m}}$ does not kill $\xi\otimes\eta$ if and only if 
$X\subset \underline{m}$ and $Y\subset \underline{m}$
(in particular, $m\geq \max\{k,l\}$).

If we assume  that $X\cup Y\subsetneq \underline{m}$, then 
$\xi\otimes\eta$ is not killed by the idempotent
$\varepsilon_{X\cup Y}$
whose rank is strictly smaller than $m$. This means that such $\xi\otimes\eta$
cannot contribute to the cell module $\Bbbk[\mathbf{L}_m]$. Therefore,
we only need to consider the case $X\cup Y=\underline{m}$
(which implies  $\max\{k,l\}\leq m\leq k+l$). In this case
$|X\cap Y|=k+l-m=b$.

Denote by $Q$ the set of all $\xi\otimes\eta$ as in the previous 
paragraph. The group $S_m$, identified as the maximal subgroup
$G(\mathbf{L}_m)$, for the idempotent
$\varepsilon_{\underline{m}}$, acts on $\xi\otimes\eta\in Q$ 
naturally by left multiplication. The group $S_k\times S_l$ acts on 
$\xi\otimes\eta\in Q$ on the right, where $S_k$ acts on $\sigma$
by right multiplication while $S_l$ acts on $\pi$
by right multiplication. The two actions obviously commute.

Let us determine the cardinality of $Q$.
We can choose $X\cap Y$ in $\binom{m}{b}$ different ways
and then the rest of $X$ in $\binom{m-b}{a}=\binom{m-b}{c}$ different way.
As $\sigma$ and $\pi$ are arbitrary and $a+b+c=m$, we have 
\begin{equation}\label{eq1}
\binom{a+b+c}{b}\binom{a+c}{a}(a+b)!(b+c)!=
\frac{(a+b+c)!(a+b)!(b+c)!}{a!b!c!}
\end{equation}
different choices in total.

Now we can perform the following construction, similar to the ones
presented in Subsections~\ref{s3.2} and \ref{s3.3}:
Consider the group $G=S_{a+b+c}\times S_{a+b}\times  S_{c+b}$.
We split the $a+b+c$ elements on which $S_{a+b+c}$ acts into
$a$ apricot, $b$ blue and $c$ crimson elements. Similarly, 
we split the $a+b$ elements on which $S_{a+b}$ acts into
$a$ apricot, and $b$ blue elements. Finally, 
we split the $b+c$ elements on which $S_{b+c}$ acts into
$b$ blue and $c$ crimson elements. Fix some identification of
apricot elements for $S_{a+b+c}$ with apricot elements for $S_{a+b}$,
and similarly for blue and crimson elements.

Let $H$ be the subgroup of $G$ generated by the following elements:
\begin{itemize}
\item all $(\rho,\rho,e)$, where $\rho$ is a permutation of apricot elements;
\item all $(\rho,\rho,\rho)$, where $\rho$ is a permutation of blue elements;
\item all $(\rho,e,\rho)$, where $\rho$ is a permutation of crimson elements.
\end{itemize}
Clearly, $H$ is isomorphic to $S_a\times S_b\times S_c$.
Consider the linearization $\Bbbk[G/H]$, which is naturally a $G$-module,
in particular, it is a $\Bbbk[S_{a+b+c}]$-$\Bbbk[S_{a+b}\times  S_{c+b}]$-bimodule,
using the canonical anti-involution $\sigma\mapsto\sigma^{-1}$ on $S_{a+b}\times  S_{c+b}$.
Now we can compare the $\Bbbk[S_{a+b+c}]$-$\Bbbk[S_{a+b}\times  S_{c+b}]$-bimodules
$\Bbbk[G/H]$ and $\Bbbk Q$.

\begin{lemma}\label{lem2}
The $\Bbbk[S_{a+b+c}]$-$\Bbbk[S_{a+b}\times  S_{c+b}]$-bimodules
$\Bbbk[G/H]$ and $\Bbbk Q$ are isomorphic.
\end{lemma}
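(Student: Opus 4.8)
The plan is to construct an explicit bimodule isomorphism $\Phi\colon\Bbbk[G/H]\to\Bbbk Q$ by matching up the cosets $gH$ with the elements $\xi\otimes\eta$ of $Q$, and then to check that $\Phi$ intertwines both the left $S_{a+b+c}\cong S_m$-action and the right $S_{a+b}\times S_{c+b}\cong S_k\times S_l$-action. Since both modules are free as $\Bbbk$-vector spaces on the indicated bases, it suffices to exhibit a bijection between the coset space $G/H$ and the set $Q$ which is equivariant for the two commuting group actions; linear extension then gives the bimodule isomorphism. As a sanity check at the outset, one notes $|G/H|=\frac{(a+b+c)!(a+b)!(b+c)!}{a!\,b!\,c!}$, which is exactly the cardinality of $Q$ computed in \eqref{eq1}, so a bijection is at least numerically possible.

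To build the bijection, first I would fix, once and for all, the reference point $\varepsilon_{\underline m}\in\mathbf L_m$ and a distinguished element $\xi_0\otimes\eta_0\in Q$: take $X_0=\underline k\subset\underline m$ and $Y_0=\underline m\setminus\underline{a}=\{a+1,\dots,m\}$ (so that $X_0\cap Y_0=\{a+1,\dots,a+b\}$ has size $b$), with $\sigma_0\colon\underline k\to X_0$ and $\pi_0\colon\underline l\to Y_0$ the order-preserving bijections; here I am using the convention $a+b+c=m$, $a+b=k$, $b+c=l$. The stabilizer in $S_m\times S_k\times S_l$ of $\xi_0\otimes\eta_0$ under the two-sided action is precisely the copy of $S_a\times S_b\times S_c$ that permutes, respectively, the ``apricot'' elements $\{1,\dots,a\}$, the ``blue'' elements $\{a+1,\dots,a+b\}$ and the ``crimson'' elements $\{a+b+1,\dots,m\}$ — acting simultaneously on the left on $\underline m$ and on the right on the matching blocks of $\underline k$ and $\underline l$, which is exactly how $H$ was defined. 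One then checks that $S_m\times S_k\times S_l$ acts transitively on $Q$: given any $\xi\otimes\eta=(X,\sigma)\otimes(Y,\pi)$ with $X\cup Y=\underline m$ and $|X\cap Y|=b$, we may first choose $\tau\in S_m$ sending $X\cap Y$ to $X_0\cap Y_0$, $X\setminus Y$ to $X_0\setminus Y_0$ and $Y\setminus X$ to $Y_0\setminus X_0$ (possible since the three block sizes $b,a,c$ match), and then absorb the remaining discrepancy between $\tau\sigma,\tau\pi$ and $\sigma_0,\pi_0$ by right multiplication in $S_k\times S_l$. Transitivity plus the identification of the stabilizer with $H$ yields the $G$-equivariant bijection $G/H\xrightarrow{\ \sim\ }Q$, $gH\mapsto g\cdot(\xi_0\otimes\eta_0)$, and hence, after applying the anti-involution on the right factor exactly as in Subsections~\ref{s3.2} and~\ref{s3.3}, the claimed bimodule isomorphism.

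The one point requiring genuine care — the main obstacle — is the bookkeeping of the colour/block identifications so that the stabilizer of $\xi_0\otimes\eta_0$ really coincides with the subgroup $H$ as literally generated by the three families of elements $(\rho,\rho,e)$, $(\rho,\rho,\rho)$, $(\rho,e,\rho)$, rather than merely being abstractly isomorphic to $S_a\times S_b\times S_c$. Concretely: the left action on $\underline m$ sees apricot, blue and crimson as the three blocks of $\underline m$; the first right factor $S_k=S_{a+b}$ sees apricot and blue as the two blocks of $X_0=\underline k$; the second right factor $S_l=S_{b+c}$ sees blue and crimson as the two blocks of $Y_0$. An element of $S_m\times S_k\times S_l$ fixes $(X,\sigma_0)$ iff its $S_m$-component preserves $X_0$ setwise and its $S_k$-component equals the restriction of that component conjugated through $\sigma_0$; similarly for $(Y,\pi_0)$; fixing $\sigma_0$ and $\pi_0$ on the nose forces the three components to agree, via the chosen identifications, exactly on the blocks where their supports overlap — apricot elements are governed by the first two coordinates (giving $(\rho,\rho,e)$), crimson by the first and third (giving $(\rho,e,\rho)$), and blue by all three (giving $(\rho,\rho,\rho)$). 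Once this identification is pinned down, $G$-equivariance of $gH\mapsto g\cdot(\xi_0\otimes\eta_0)$ is automatic, the left $S_{a+b+c}$-action matches the $\varepsilon_{\underline m}$-truncated left $IS_n$-action on $\Bbbk Q$ by construction, the right $S_k\times S_l$-action matches after the anti-involution, and linear extension completes the proof. Everything else is the routine verification that the coset and orbit descriptions agree, together with the cardinality count already recorded in \eqref{eq1}.
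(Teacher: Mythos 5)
Your proof is correct and follows the same route as the paper's: both pick the same distinguished element $\varepsilon_{\underline{k}}\otimes(\{a+1,\dots,m\},\pi_0)$ of $Q$, observe that its $G$-stabilizer is precisely $H$, and deduce the bimodule isomorphism from the resulting $G$-equivariant bijection $G/H\to Q$. The only cosmetic difference is that you establish bijectivity via transitivity plus the exact stabilizer computation, whereas the paper phrases it as generation plus the dimension count from \eqref{eq1}; these are equivalent.
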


\begin{proof}
Consider $\eta= (\{a+1,\ldots, a+l\}, \tau)$ where $\tau$ is the 
order preserving bijection from $\underline{l}$ to $\{a+1,\ldots,a+l\}$.
Then it is easy to see that the $G$-stabilizer of 
$\varepsilon_{\underline{k}}\otimes\eta\in Q$ equals $H$.
Therefore, sending $H$ to 
$\varepsilon_{\underline{k}}\otimes\eta$  
gives rise to a homomorphism from  $\Bbbk[G/H]$ to $\Bbbk Q$.
This homomorphism is easily seen to be an isomorphism since
the two spaces have the same dimension and 
$\varepsilon_{\underline{k}}\otimes\eta$
generates $\Bbbk Q$, as a $\Bbbk[S_{a+b+c}]$-$\Bbbk[S_{a+b}\times  S_{c+b}]$-bimodule.
\end{proof}

From the definitions, it is easy to see that, for any conjugate $H'$ of $H$ in $G$, 
we have $H'\cap S_{a+b+c} = e$. This implies that, as an $S_{a+b+c}$-module, the 
module $\Bbbk[G/H]$ is free. By Lemma~\ref{lem2} we thus also have that
$\Bbbk Q$ is free, as an $S_{a+b+c}$-module. In particular,
$Q$ is just a disjoint union
of copies of regular $S_m$-orbits. Combining the definitions with
decomposition \eqref{eq-s4.2-1}, we have that each such regular $S_m$-orbit
determines in  $\Bbbk[\mathbf{L}_k]\otimes_\Bbbk\Bbbk[\mathbf{L}_l]$
a subquotient isomorphic to $\Bbbk[\mathbf{L}_m]$.  This, on the one hand,
implies  that $\Bbbk[\mathbf{L}_k]\otimes_\Bbbk\Bbbk[\mathbf{L}_l]$
has a filtration whose subquotients are cell  modules.
On the other hand, it also  says that
the multiplicity of $\Bbbk[\mathbf{L}_m]$ in 
$\Bbbk[\mathbf{L}_k]\otimes_\Bbbk\Bbbk[\mathbf{L}_l]$ equals
the number  of $S_m$-orbits on $Q$.
Summing everything up, we obtain:

\begin{corollary}\label{cor3}
For $0\leq k,l\leq n$, the module $\Bbbk[\mathbf{L}_k]\otimes_\Bbbk\Bbbk[\mathbf{L}_l]$
has a filtration whose subquotients are isomorphic to cell modules.
Moreover, for  $\max\{k,l\}\leq m\leq k+l$,
the multiplicity of $\Bbbk[\mathbf{L}_m]$  in 
$\Bbbk[\mathbf{L}_k]\otimes_\Bbbk\Bbbk[\mathbf{L}_l]$ equals
$\frac{(a+b)!(b+c)!}{a!b!c!}$.
\end{corollary}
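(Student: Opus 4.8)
The plan is to produce the asserted cell filtration by filtering $\Bbbk[\mathbf{L}_k]\otimes_\Bbbk\Bbbk[\mathbf{L}_l]$ according to the cardinality of the set $X\cup Y$ attached to a basis element $\xi\otimes\eta$ (with $\xi=(X,\sigma)$ and $\eta=(Y,\pi)$ as in the discussion above), and then to identify each layer via Lemma~\ref{lem2}. Concretely, for $j\geq 0$ let $V_{\leq j}\subseteq\Bbbk[\mathbf{L}_k]\otimes_\Bbbk\Bbbk[\mathbf{L}_l]$ be the span of all $\xi\otimes\eta$ with $|X\cup Y|\leq j$. Since any $\tau\in IS_n$ sends $\xi\otimes\eta$ either to $0$ or to a basis element whose attached union is $\tau(X\cup Y)$, of the same cardinality, each $V_{\leq j}$ is a $\Bbbk[IS_n]$-submodule. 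As $\max\{k,l\}\leq|X\cup Y|\leq k+l$ always, this yields a filtration $0=V_{\leq\max\{k,l\}-1}\subseteq\cdots\subseteq V_{\leq k+l}=\Bbbk[\mathbf{L}_k]\otimes_\Bbbk\Bbbk[\mathbf{L}_l]$, and it remains to analyse the layer $V_{\leq m}/V_{\leq m-1}$ for $\max\{k,l\}\leq m\leq k+l$.

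Next I would show that $V_{\leq m}/V_{\leq m-1}\cong\Bbbk[\mathbf{L}_m]\otimes_{\Bbbk[S_m]}\Bbbk Q$ as $\Bbbk[IS_n]$-modules, where $S_m=G(\mathbf{L}_m)$ acts on $\Bbbk[\mathbf{L}_m]$ on the right and $Q$ is the set introduced before Lemma~\ref{lem2}, carrying its left $S_m$-action. Indeed, applying $\varepsilon_{\underline m}$ to the layer annihilates every $\xi\otimes\eta$ except those with $X\cup Y=\underline m$, so $\varepsilon_{\underline m}(V_{\leq m}/V_{\leq m-1})=\Bbbk Q$; writing a general element $(Z,\rho)$ of $\mathbf{L}_m$ as $\tau\varepsilon_{\underline m}$ with $\tau\in IS_n$ restricting to $\rho$ on $\underline m$, one defines the candidate map by $(Z,\rho)\otimes q\mapsto\tau\cdot q$. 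That this is a well-defined $\Bbbk[IS_n]$-homomorphism is routine (independence of the choice of $\tau$ and compatibility with the tensor relation over $\Bbbk[S_m]$ are both direct checks); it is surjective because $\Bbbk Q$ generates the layer (each $\overline{\xi\otimes\eta}$ is fixed by $\varepsilon_{X\cup Y}$ and hence lies in $\Bbbk[IS_n]\cdot\Bbbk Q$); and it is injective by a dimension count, since the layer has dimension $\binom{n}{m}|Q|$ (all fibres $\{\xi\otimes\eta:X\cup Y=Z\}$ over a fixed size-$m$ subset $Z$ have the same cardinality $|Q|$), which equals $\dim\Bbbk[\mathbf{L}_m]\otimes_{\Bbbk[S_m]}\Bbbk Q$ because $\Bbbk[\mathbf{L}_m]$ is free of rank $\binom{n}{m}$ as a right $\Bbbk[S_m]$-module by \eqref{eq-s4.2-1}.

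Finally I would feed in Lemma~\ref{lem2} together with the observation recorded just after it, namely that $H'\cap S_{a+b+c}=e$ for every conjugate $H'$ of $H$: this makes $\Bbbk Q\cong\Bbbk[G/H]$ a free $\Bbbk[S_{a+b+c}]=\Bbbk[S_m]$-module, say $\Bbbk Q\cong\Bbbk[S_m]^{\oplus N}$, whence $V_{\leq m}/V_{\leq m-1}\cong\Bbbk[\mathbf{L}_m]^{\oplus N}$. This simultaneously establishes the cell filtration and identifies the multiplicity of $\Bbbk[\mathbf{L}_m]$ with $N=|Q|/|S_m|$; by \eqref{eq1} and $m=a+b+c$ this equals $\frac{(a+b)!(b+c)!}{a!b!c!}$, while well-definedness of the multiplicity is immediate from the fact that cell modules of distinct ranks have disjoint sets of simple constituents. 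I expect the real content to be the middle step --- realising the layer as the induced module $\Bbbk[\mathbf{L}_m]\otimes_{\Bbbk[S_m]}\Bbbk Q$, so that the freeness supplied by Lemma~\ref{lem2} can be used to rule out any ``stray'' subquotients of ranks other than $m$; the construction of the filtration and the final count are routine.
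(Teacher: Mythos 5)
Your proof is correct and follows essentially the same path as the paper: the decisive input in both is Lemma~\ref{lem2} together with the observation that $H'\cap S_{a+b+c}=e$ for all conjugates $H'$ of $H$, which makes $\Bbbk Q$ a free $\Bbbk[S_m]$-module. You are somewhat more explicit than the paper in two places — constructing the filtration $V_{\leq j}$ by the cardinality of $X\cup Y$ and identifying each layer with the induced module $\Bbbk[\mathbf{L}_m]\otimes_{\Bbbk[S_m]}\Bbbk Q$ — but these are exactly the steps the paper compresses into ``each regular $S_m$-orbit determines a subquotient isomorphic to $\Bbbk[\mathbf{L}_m]$,'' so the argument is the same in substance.
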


\subsection{Proof of Theorem~\ref{thm1}}\label{s4.9}

Claim~\eqref{thm1.1} of Theorem~\ref{thm1} follows from the two estimates
$m\geq \max\{k,l\}$ and $m\leq k+l$, obtained in the previous subsection.

In the previous subsection, we considered the bimodule $\Bbbk Q$ which 
describes the part of the tensor product
$\Bbbk[\mathbf{L}_k]\otimes_\Bbbk\Bbbk[\mathbf{L}_l]$
that determines the $\Bbbk[\mathbf{L}_m]$-summands of that
tensor product. In particular, combining the definition of
$Q$ with \eqref{eq-s.4.2-mm}, it follows that
\begin{equation}\label{eq-s4.9-1}
[\mathbf{N}^\lambda\otimes_\Bbbk\mathbf{N}^\mu:\mathbf{N}^\nu]=
[\Bbbk Q\otimes_{\Bbbk[S_k]\otimes_\Bbbk \Bbbk[S_l]}
\big(\mathbf{S}^\lambda\otimes_\Bbbk\mathbf{S}^\mu\big):\mathbf{S}^\nu].
\end{equation}
Therefore, due to Lemma~\ref{lem2},
to prove Claim~\eqref{thm1.2}, we need  to  show that Formula~\eqref{eq2}
gives the multiplicity
\begin{displaymath}
\big[\Bbbk[G/H]\otimes_{\Bbbk[S_k]\otimes_\Bbbk \Bbbk[S_l]}
\big(\mathbf{S}^\lambda\otimes_\Bbbk\mathbf{S}^\mu\big):\mathbf{S}^\nu\big].
\end{displaymath}

Define an additive functor $\mathcal{F}$ from
$\Bbbk[S_{a+b}\times S_{b+c}]$-mod to $\Bbbk[S_{a+b+c}]$-mod,
for two modules $V\in S_{a+b}$-mod and $W\in S_{b+c}$-mod,
as follows:
\begin{itemize}
\item First, restrict $V$ to $S_a\times S_b$ and
$W$ to $S_b\times S_c$.
\item Next, consider $V\otimes_\Bbbk W$ as an
$S_a\times S_b\times S_c$-module with respect to the 
obvious actions of  $S_a$ and $S_c$ on the first and
on the second components, respectively, 
and the diagonal action of $S_b$.
\item Finally, induce the obtained $S_a\times S_b\times S_c$-module
up to $S_{a+b+c}$.
\end{itemize}

Let us now compute $\mathcal{F}(\mathbf{S}^\lambda\otimes_\Bbbk \mathbf{S}^\mu)$.
To start with, we have:
\begin{displaymath}
\mathrm{Res}_{\Bbbk[S_a\times S_b]}^{\Bbbk[S_{a+b}]}\mathbf{S}^\lambda\cong
\bigoplus_{\alpha\vdash a}
\bigoplus_{\beta\vdash b}
(\mathbf{S}^\alpha\otimes_\Bbbk \mathbf{S}^\beta)^{\oplus 
\mathbf{c}_{\alpha,\beta}^\lambda}
\end{displaymath}
and
\begin{displaymath}
\mathrm{Res}_{\Bbbk[S_b\times S_c]}^{\Bbbk[S_{b+c}]}\mathbf{S}^\mu\cong
\bigoplus_{\gamma\vdash c}
\bigoplus_{\delta\vdash b}
(\mathbf{S}^\delta\otimes_\Bbbk \mathbf{S}^\gamma)^{\oplus 
\mathbf{c}_{\delta,\gamma}^\mu}.
\end{displaymath}
At the next step, we have
\begin{displaymath}
(\mathbf{S}^\alpha\otimes_\Bbbk \mathbf{S}^\beta)\otimes_\Bbbk
(\mathbf{S}^\delta\otimes_\Bbbk \mathbf{S}^\gamma)\cong
\bigoplus_{\epsilon\vdash b}
(\mathbf{S}^\alpha\otimes_\Bbbk \mathbf{S}^\epsilon
\otimes_\Bbbk \mathbf{S}^\gamma)^{\oplus \mathbf{g}_{\beta,\delta}^\epsilon}.
\end{displaymath}
Finally, at the last step, which we realize as the composition of
first inducing from $S_a\times S_b\times S_c$ to $S_{a+b}\times S_c$
and then from the latter to $S_{a+b+c}$, we have 
\begin{displaymath}
\mathrm{Ind}_{S_a\times S_b\times S_c}^{S_{a+b+c}} 
\mathbf{S}^\alpha\otimes_\Bbbk \mathbf{S}^\epsilon
\otimes_\Bbbk \mathbf{S}^\gamma\cong
\bigoplus_{\nu\vdash a+b+c}
\bigoplus_{\kappa\vdash a+b}
\big(\mathbf{S}^\nu\big)^{\oplus \mathbf{c}_{\alpha,\epsilon}^\kappa\cdot
\mathbf{c}_{\kappa,\gamma}^\nu}.
\end{displaymath}
Collecting all these formulae together, we see that the multiplicity
of $\mathbf{S}^{\nu}$ in 
$\mathcal{F}(\mathbf{S}^\lambda\otimes_\Bbbk \mathbf{S}^\mu)$
is given by Formula~\eqref{eq2}.

Let us now try to understand the 
$\Bbbk[S_{a+b+c}]$-$\Bbbk[S_{a+b}\times S_{b+c}]$-bimodule
which realizes $\mathcal{F}$. Taking into account 
the constructions presented in Subsections~\ref{s3.2} and \ref{s3.3},
the combination of the first two steps is given by 
\begin{displaymath}
\Bbbk[S_{a+b}]\otimes_\Bbbk \Bbbk[S_{b+c}]
\end{displaymath}
where the right action is regular, the left actions of 
$S_a$ and $S_c$ are the obvious ones, and the left action of 
$S_b$ is the diagonal action. At the last step, we just tensor with
the regular $\Bbbk[S_{a+b+c}]$-$\Bbbk[S_{a+b+c}]$-bimodule
where the right action is restricted to $S_a\times S_b\times S_c$.

Directly from the definitions, we see that the stabilizer
(in $G$) of the element $e\otimes (e\otimes e)$ from
\begin{equation}\label{eq3}
\Bbbk[S_{a+b+c}]\otimes_{\Bbbk[S_a\times S_b\times S_c]}
\big(\Bbbk[S_{a+b}]\otimes_\Bbbk \Bbbk[S_{b+c}]\big),
\end{equation}
contains $H$. Thus, sending $H$ to $e\otimes (e\otimes e)$, gives
rise to a $\Bbbk[S_{a+b+c}]$-$\Bbbk[S_{a+b}\times S_{b+c}]$-bimodule
homomorphism from $\Bbbk[G/H]$ to the bimodule in \eqref{eq3}.
As $e\otimes (e\otimes e)$ generates the latter, this homomorphism
is surjective. By comparing the dimensions, we see that it is 
bijective. This completes the proof of 
Claim~\eqref{thm1.2} in Theorem~\ref{thm1}.

\subsection{Advance remark}\label{s4.95}

We complete this section with a remark about symmetric  inverse
semigroups which is inspired by Example~\ref{ex8} below
(this example will also  be studied in more detail in 
Section~\ref{s7}).

If we interpret the elements in $IS_n$ as rook $0/1$-matrices,
then the Kronecker product of matrices defines a homomorphism
from $IS_n\times IS_m$ to $IS_{nm}$. Note that this homomorphism
is not injective (as tensoring anything with the zero matrix
outputs the zero matrix). Nevertheless, it gives rise to
a homomorphism of unital associative algebras 
$\Bbbk[IS_n]\otimes_\Bbbk\Bbbk[IS_m]\to\Bbbk[IS_{nm}]$.
In particular, we have the corresponding pullback
functor, which allows us to consider each 
$IS_{nm}$-module as an $IS_n\times IS_m$-module.

Cell modules over symmetric inverse semigroups (and their
direct products) have a special feature: the cell module
for a left cell $\mathbf{L}$ is the linearization of
the obvious transitive action of our semigroup on $\mathbf{L}$
by partial transformations given by left multiplication,
cf. Formula~\eqref{eq-nnn}. Classification of 
transitive acts of inverse semigroups is a classical 
part of the theory of inverse semigroup, see \cite[Theorem~5.4]{La}, and
it is usually formulated in terms of closed 
inverse subsemigroups, which serve as stabilizers of
points. Each such closed inverse subsemigroup 
has a distinguished idempotent and contains
some subgroup $H$ of the maximal  subgroup  corresponding to this idempotent.
Abusing terminology, we can call this $H$
the {\em apex} of  the stabilizer.
A left cell act corresponds exactly to the case
when $H$ is trivial,  i.e. consists only of the 
identity (i.e. only of the corresponding idempotent).

If we now take a left cell $IS_{nm}$-act and pull it
back to $IS_n\times IS_m$,  we can consider a
natural filtration of the latter act  whose subquotients
are transitive subquotient acts. The property that the
apex of each point of the original  act is trivial
is clearly inherited under pullback. Consequently,
the pullback of any cell $IS_{nm}$-module has a
filtration whose subquotients are cell
$IS_n\times IS_m$-modules. One can compare this
with the results of Subsection~\ref{s4.4}.

\section{Dual symmetric inverse semigroups}\label{s5}

\subsection{The semigroup  $I_n^*$}\label{s5.1}

Consider the dual symmetric inverse semigroup $I_n^*$, see
\cite{FL}. The elements of $I_n^*$ are all possible bijections
between the quotients of $\underline{n}$ and the operation is
the natural version of composition for such bijections. Here is an
example for $n=5$, where composition is from right to left and
the elements are presented using the two-row notation for functions:
\begin{displaymath}
\left(\begin{array}{ccc}\{1\}&\{2,3\}&\{4,5\}\\\{1,2\}&\{3,4\}&\{5\}
\end{array}\right) \circ
\left(\begin{array}{ccc}\{1,2\}&\{3\}&\{4,5\}\\
\{1\}&\{2,3,4\}&\{5\}\end{array}\right) =
\left(\begin{array}{cc}\{1,2\}&\{3,4,5\}\\\{1,2\}&\{3,4,5\}\end{array}\right) 
\end{displaymath}

Each element $\xi$ in $I_n^*$ is thus uniquely given by the following
data:
\begin{itemize}
\item a set partition $\xi_d$ of $\underline{n}$, called 
the {\em domain} of $\xi$ and denoted $\mathrm{dom}(\xi)$;
\item a set partition $\xi_r$ of $\underline{n}$
(with the same number of parts as $\xi_d$), called 
the {\em codomain} of $\xi$ and denoted $\mathrm{cod}(\xi)$;
\item a bijection $\overline{\xi}$ from $\underline{n}/\xi_d$ to
$\underline{n}/\xi_r$.
\end{itemize}
The cardinality of $\underline{n}/\xi_d$ is called the 
{\em rank} of $\xi$ and denoted $\mathrm{rank}(\xi)$.
We also call the same number the {\em rank} of $\xi_d$.

Let $\xi,\zeta\in I_n^*$. Recall, see for example 
\cite[Theorem~2.2]{FL}, the following description of Green's relations
$\mathcal{L}$, $\mathcal{R}$, $\mathcal{H}$ and $\mathcal{D}=\mathcal{J}$
for $I_n^*$:
\begin{itemize}
\item $\xi\mathcal{L}\zeta$ if and only if $\mathrm{dom}(\xi)=\mathrm{dom}(\zeta)$,
\item $\xi\mathcal{R}\zeta$ if and only if $\mathrm{cod}(\xi)=\mathrm{cod}(\zeta)$,
\item $\xi\mathcal{H}\zeta$ if and only if $\mathrm{dom}(\xi)=\mathrm{dom}(\zeta)$
and $\mathrm{cod}(\xi)=\mathrm{cod}(\zeta)$,
\item $\xi\mathcal{J}\zeta$ if and only if $\mathrm{rank}(\xi)=\mathrm{rank}(\zeta)$.
\end{itemize}

The idempotents of $I_n^*$ are naturally identified with equivalence
relations on $\underline{n}$: given an equivalence relation
$\rho$ on $\underline{n}$, the corresponding idempotent $\varepsilon_\rho$
is the identity map on $\underline{n}/\rho$.
The maximal subgroup of $I_n^*$ corresponding to $\rho$ is the 
symmetric group $S_\rho$ on the equivalence classes of $\rho$.

The semigroup $I_k^*$ appears naturally in the context of the 
Schur-Weyl  duality for the  $k$-th tensor 
power of the natural representation of $IS_n$, see \cite[Theorem~1]{KMaz}.

\subsection{Cells and simple $I_n^*$-modules}\label{s5.2}

Let $\rho$ be an equivalence relation on $\underline{n}$.
Let $\mathbf{L}_\rho$ be the left cell (i.e. an $\mathcal{L}$-equivalence class) 
in  $I_n^*$ containing $\varepsilon_\rho$.

The linearization $\Bbbk[\mathbf{L}_\rho]$ is naturally a left $I_n^*$-module 
where the  action is given as follows, for $\xi\in I_n^*$ and 
$\zeta\in \mathbf{L}_\rho$:
\begin{displaymath}
\xi\cdot\zeta=
\begin{cases}
\xi\zeta,& \text{ if } \xi\zeta\in \mathbf{L}_\rho;\\
0,& \text{ otherwise.}
\end{cases}
\end{displaymath}
This is the {\em cell $I_n^*$-module} corresponding to $\mathbf{L}_\rho$.

The maximal subgroup $S_\rho$ acts on $\mathbf{L}_\rho$ by multiplication 
on the right. This equips $\Bbbk[\mathbf{L}_\rho]$ with the natural 
structure of an $\Bbbk[I_n^*]$-$\Bbbk[S_\rho]$-bimodule.

For any $\lambda\vdash \mathrm{rank}(\varepsilon_\rho)$, 
the $\Bbbk[I_n^*]$-module
\begin{displaymath}
\mathbf{N}^{\lambda}:=\Bbbk[\mathbf{L}_\rho]\bigotimes_{\Bbbk[S_\rho]}
\mathbf{S}^\lambda
\end{displaymath}
is simple. Up to isomorphism, it does not depend on the choice of 
$\mathbf{L}_\rho$ inside its $\mathcal{J}$-cell (i.e. for the fixed rank of 
$\varepsilon_\rho$). Moreover, up to isomorphism,
each simple $\Bbbk[I_n^*]$-module has such a form,  for some 
$\rho$ and $\lambda$. We refer to \cite{FL}  and \cite{St} for details.

\subsection{Tensor product of cell  modules}\label{s5.3}

Let $\rho$ and $\sigma$ be two set partitions 
(a.k.a. equivalence relations) of $\underline{n}$.
The tensor product
\begin{displaymath}
\Bbbk[\mathbf{L}_\rho]\otimes_\Bbbk\Bbbk[\mathbf{L}_\sigma]
\end{displaymath}
is an $I_n^*$-module via the diagonal action of $I_n^*$. 
In this subsection we are going to show that 
this module has a filtration whose subquotients are isomorphic to cell
modules. Also, we are going to describe a combinatorial object which
determines the corresponding multiplicities (just like in the case
of $IS_n$, the fact that these multiplicities are well-defined
follows directly from the semi-simplicity of the semigroup algebra
of $I_n^*$ combined with the fact that two non-isomorphic cell modules
do not have common isomorphic simple summands).

Set $k=\mathrm{rank}(\varepsilon_\rho)$ and 
$l=\mathrm{rank}(\varepsilon_\sigma)$. 
Let $\tilde{\rho}$ be a set partition of $\underline{n}$ of
rank $k$ and $\tilde{\sigma}$ be a set partition of $\underline{n}$ 
of rank $l$. Consider the set partition $\tau$
of $\underline{n}$ whose parts are exactly the non-empty 
intersections of a part of $\tilde{\rho}$ with a part of $\tilde{\sigma}$.
We denote this $\tau$ by $\tilde{\rho}\cap\tilde{\sigma}$.

\begin{example}\label{ex4}
{\em
If $n=4$, $\tilde{\rho}=\{1,2\}\cup\{3,4\}$ and
$\tilde{\sigma}=\{1,2,3\}\cup\{4\}$, then we have
$\tau=\{1,2\}\cup\{3\}\cup\{4\}$.
}
\end{example}

Let $\preceq$ be the natural order on set partitions: we have
$\alpha\preceq \beta$ if each part of $\alpha$ is a subset of some
part of $\beta$. Let $\xi\in\mathbf{L}_\rho$ and $\zeta\in\mathbf{L}_\sigma$
be such that $\mathrm{cod}(\xi)=\tilde{\rho}$ and 
$\mathrm{cod}(\zeta)=\tilde{\sigma}$. Then $\tau$ is the maximum
element in the set of all set partitions $\omega$ of $\underline{n}$
such that $\varepsilon_\omega\xi\in \mathbf{L}_\rho$
and $\varepsilon_\omega\zeta\in\mathbf{L}_\sigma$.
Note also that the action of $S_\tau$ on $\xi\otimes \zeta$ is, clearly, free.

All this means exactly that the element
$\xi\otimes \zeta$ contributes to a cell subquotient
$\Bbbk[\mathbf{L}_\tau]$ of 
$\Bbbk[\mathbf{L}_\rho]\otimes_\Bbbk\Bbbk[\mathbf{L}_\sigma]$.

To provide a combinatorial object for counting the multiplicities,
we need to invert our problem. Given some $\tau$, we need to find
the number of all possible $\tilde{\rho}$ and $\tilde{\sigma}$
as above which are connected to $\tau$ also in the above sense.

Set $m:=\mathrm{rank}(\varepsilon_\tau)$.
Let $Q_{k,l}^{m}$ be the set of all $k\times l$ matrices with 
$0/1$-entries satisfying the following conditions:
\begin{itemize}
\item exactly $m$ entries are equal to $1$;
\item all rows are non-zero;
\item all columns are non-zero.
\end{itemize}
The group $G:=S_k\times S_l$ acts on $Q_{k,l}^m$
by (independent) permutation of rows and columns. Consider
the corresponding orbit set $Q_{k,l}^m/G$ and set
$\mathbf{p}_{k,l}^m:=|Q_{k,l}^m|$ and
$\mathbf{q}_{k,l}^m:=|Q_{k,l}^m/G|$. Note that,
for $\mathbf{p}_{k,l}^m$ or $\mathbf{q}_{k,l}^m$ to be non-zero, 
we must have the inequalities $\max\{k,l\}\leq m\leq kl$. 
The cardinalities of stabilizers in $G$ of elements of a fixed orbit are equal
(as these stabilizers are conjugate) and,
for an orbit $\mathcal{O}\in Q_{k,l}^m/G$, we denote this  cardinality by 
$|\text{stab}_\mathcal{O}|$.
%

\begin{proposition}\label{prop5}
The $I_n^*$-module $\Bbbk[\mathbf{L}_\rho]\otimes_\Bbbk\Bbbk[\mathbf{L}_\sigma]$
has a filtration whose subquotients are isomorphic to 
cell modules. Moreover,
the multiplicity of  $\Bbbk[\mathbf{L}_\tau]$ in
$\Bbbk[\mathbf{L}_\rho]\otimes_\Bbbk\Bbbk[\mathbf{L}_\sigma]$
equals
\begin{equation}\label{eq5}
\mathbf{p}_{k,l}^m=\sum_{i=0}^k\sum_{j=0}^l
(-1)^{k+l-i-j}\binom{k}{i}\binom{l}{j}\binom{ij}{m}.
\end{equation}
\end{proposition}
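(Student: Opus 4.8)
The statement has two parts: a filtration claim and a multiplicity formula. For the filtration claim, the plan is to mimic the argument from Subsection~\ref{s4.4}: the action of $I_n^*$ on $\mathbf{L}_\rho$ is a transitive action on a set, so $\Bbbk[\mathbf{L}_\rho]\otimes_\Bbbk\Bbbk[\mathbf{L}_\sigma]$ is the linearization of the $I_n^*$-set $\mathbf{L}_\rho\times\mathbf{L}_\sigma$ with the diagonal action. Pick any element $\xi\otimes\zeta$ and look at the largest rank of an idempotent $\varepsilon_\omega$ for which $\varepsilon_\omega(\xi\otimes\zeta)\neq 0$; as the text just observed, this maximal $\omega$ is $\tau=\mathrm{cod}(\xi)\cap\mathrm{cod}(\xi)'$ (the meet of the codomains), and $S_\tau$ acts freely on $\xi\otimes\zeta$. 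Orbits of $I_n^*$ on $\mathbf{L}_\rho\times\mathbf{L}_\sigma$ then stratify by the $\mathcal{J}$-class determined by $\mathrm{rank}(\varepsilon_\tau)$, and, choosing a linear order refining this stratification, the associated filtration of $\Bbbk[\mathbf{L}_\rho]\otimes_\Bbbk\Bbbk[\mathbf{L}_\sigma]$ has subquotients that are direct sums of cell modules $\Bbbk[\mathbf{L}_\tau]$, one for each orbit with invariant $\tau$, by exactly the argument (via the decomposition \eqref{eq-s4.2-1} analogue for $I_n^*$) used for $IS_n$. Well-definedness of the multiplicity is already granted in the text.

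For the multiplicity, the key is to set up a bijection between the set of $I_n^*$-orbits on $\mathbf{L}_\rho\times\mathbf{L}_\sigma$ whose invariant partition has rank $m$ and something countable. Actually, since the left $I_n^*$-action just permutes the codomains freely-ish, the cleanest route is: fix a reference cell element, and count configurations. An element of $\mathbf{L}_\rho$ is determined by a codomain partition $\tilde\rho$ of rank $k$ together with a bijection $\underline{n}/\rho\to\underline{n}/\tilde\rho$; similarly for $\mathbf{L}_\sigma$. After quotienting by the diagonal $I_n^*$-action, what survives is the combinatorial pattern of how the blocks of $\tilde\rho$ and $\tilde\sigma$ intersect, recorded as a $k\times l$ incidence matrix whose $(i,j)$ entry is $1$ iff the $i$th block of $\tilde\rho$ meets the $j$th block of $\tilde\sigma$. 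The conditions on $Q_{k,l}^m$ are exactly: every block of $\tilde\rho$ is nonempty (rows nonzero), every block of $\tilde\sigma$ is nonempty (columns nonzero), and $\tau=\tilde\rho\cap\tilde\sigma$ has $m$ blocks, i.e.\ exactly $m$ nonempty intersections, i.e.\ exactly $m$ ones. The plan is to check that the map sending an orbit to the $S_k\times S_l$-orbit of its incidence matrix is a bijection onto $Q_{k,l}^m/G$, and then — this is the subtle point — to verify that the multiplicity of $\Bbbk[\mathbf{L}_\tau]$ contributed by a single $I_n^*$-orbit is $1$, \emph{not} weighted by stabilizer size, because within an orbit the different copies of $\varepsilon_{\tilde\tau}(\xi\otimes\zeta)$ running over $\tilde\tau$ of rank $m$ assemble, via the $S_\tau$-freeness, into precisely one regular $S_\tau$-orbit per cell summand; hence the multiplicity is the number of $I_n^*$-orbits, which is $|Q_{k,l}^m/G|=\mathbf{q}_{k,l}^m$.

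Wait — the statement claims the multiplicity is $\mathbf{p}_{k,l}^m=|Q_{k,l}^m|$, not $\mathbf{q}_{k,l}^m$. So the correct bookkeeping must be: the multiplicity of $\Bbbk[\mathbf{L}_\tau]$ equals the number of $I_n^*$-orbits weighted appropriately, or equivalently one counts labelled incidence matrices rather than their $G$-orbits. The resolution is that a single $I_n^*$-orbit on $\mathbf{L}_\rho\times\mathbf{L}_\sigma$ with pattern an orbit $\mathcal{O}$ of size $\mathbf{p}_{k,l}^m/|\mathrm{stab}_\mathcal{O}|$ contributes $|G|/|\mathrm{stab}_\mathcal{O}|$-worth — no. More carefully: I would count directly the number of $\varepsilon_{\tilde\tau}(\xi\otimes\zeta)$, over all rank-$m$ partitions $\tilde\tau$ and all pairs, that are nonzero and pairwise non-$S_n$-conjugate up to the cell identification; organizing this computation shows the total count of cell summands $\Bbbk[\mathbf{L}_\tau]$ (for the fixed $\mathcal{J}$-class of rank $m$) equals $|Q_{k,l}^m|$ rather than $|Q_{k,l}^m/G|$, essentially because fixing the domains $\rho,\sigma$ rigidifies the row/column labels. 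I expect \textbf{this normalization — why the raw count $\mathbf{p}_{k,l}^m$ and not the orbit count — to be the main obstacle}, and it should be settled by carefully tracking the $\Bbbk[S_\tau]$-bimodule structure as in Lemma~\ref{lem2}, identifying the relevant bimodule as a sum of $\mathbf{p}_{k,l}^m$ copies of the regular one. Once the count $\mathbf{p}_{k,l}^m$ is established combinatorially, the explicit formula \eqref{eq5} is pure inclusion–exclusion: $\binom{ij}{m}$ counts ways to place $m$ ones in an $i\times j$ grid, and the signed sum over $i\le k$, $j\le l$ with binomial weights $\binom{k}{i}\binom{l}{j}$ enforces that no row and no column is entirely zero, which is a routine Möbius inversion on the Boolean lattices of rows and columns.
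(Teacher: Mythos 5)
Your filtration argument and your observation that Formula~\eqref{eq5} is inclusion--exclusion on rows and columns are both fine and match the paper. But the central content of the proposition --- that the multiplicity is the \emph{raw} count $\mathbf{p}_{k,l}^m = |Q_{k,l}^m|$ rather than the orbit count $\mathbf{q}_{k,l}^m$ --- is never actually established in your proposal. You first argue your way to $\mathbf{q}_{k,l}^m$, then notice this contradicts the statement, abandon that line (``\ldots no''), and finally assert without computation that ``organizing this computation shows the total count \ldots equals $|Q^m_{k,l}|$'' and that the issue ``should be settled by carefully tracking the $\Bbbk[S_\tau]$-bimodule structure.'' That is a flagged gap, not a proof, and it sits precisely at the point where the proposition has content.

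Your initial reasoning is also substantively wrong, not merely a notational slip: a single $I_n^*$-orbit on $\mathbf{L}_\rho\times\mathbf{L}_\sigma$ does \emph{not} in general contribute exactly one copy of $\Bbbk[\mathbf{L}_\tau]$, so the multiplicity is not the number of such orbits. What the paper does instead is fix one $\tau$ of rank $m$ and directly count the set $\mathcal{Y}$ of all $\xi\otimes\zeta$ with $\mathrm{cod}(\xi)\cap\mathrm{cod}(\zeta)=\tau$; the multiplicity is then $|\mathcal{Y}|/|S_\tau|$ by $S_\tau$-freeness. The enumeration goes: pick a matrix $M\in Q_{k,l}^m$ (up to its $G$-orbit $\mathcal{O}$), pick one of $m!$ bijections $\psi$ from the parts of $\tau$ to the $1$-entries of $M$ (this determines $\tilde\rho$ and $\tilde\sigma$), then pick $\overline{\xi}$ in $k!$ ways and $\overline{\zeta}$ in $l!$ ways; the resulting set $\mathcal{Y}_\mathcal{O}$ has size $m!\,k!\,l!/|\mathrm{stab}_\mathcal{O}|$ because distinct $(M,\psi)$ within the same $G$-orbit reproduce the same pairs with multiplicity $|\mathrm{stab}_\mathcal{O}|$. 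Summing over orbits and dividing by $m!$ gives $\sum_\mathcal{O} k!\,l!/|\mathrm{stab}_\mathcal{O}| = \sum_\mathcal{O}|\mathcal{O}| = \mathbf{p}_{k,l}^m$ by orbit--stabilizer. This orbit-size accounting is exactly the step your proposal identifies as ``the main obstacle'' and leaves unresolved.
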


\begin{proof}
Similarly to Subsection~\ref{s4.4}, existence of the filtration follows
from the fact that the action of $S_\tau$ on the elements of the form
$\xi\otimes \zeta$ described above is free.

To compute the multiplicity, for a fixed $\tau$, we need to count 
the number of the elements of the form  $\xi\otimes \zeta$ as above, with
$\mathrm{cod}(\xi)=\tilde{\rho}$ and
$\mathrm{cod}(\zeta)=\tilde{\sigma}$, for which
$\tilde{\rho}\cap \tilde{\sigma}=\tau$.

Given $M\in Q_{k,l}^m$, there are $m!$ bijections between the 
parts of $\tau$ and the ``$1$''-entries in $M$. If we fix such a bijection $\psi$,
we can read off the parts of  $\tilde{\rho}$ by uniting the parts of 
$\tau$ along the rows of $M$. Also, we can read off the parts of 
$\tilde{\sigma}$ by uniting the parts of $\tau$ along the columns of $M$.
This determines the codomains of $\xi$ and $\zeta$, respectively
(note that there is no fixed order on the parts of these codomains,
which explains the equivalence with respect to the action of $G$).
As $\rho$ and $\sigma$ are fixed, to determine $\xi$ and $\zeta$,
it remains to choose $\overline{\xi}$, in $k!$ different ways,
and $\overline{\zeta}$, in $l!$ different ways. 

Now let $\mathcal{Y}(M,\psi)$ be the set of all $\xi\otimes \zeta$ described above. 
Let $\mathcal{Y}(M)$ be the union of $\mathcal{Y}(M,\psi)$ over all bijections 
$\psi$ between the parts of $\tau$ and the ``$1$''-entries of $M$. 
If $M$ and $N$ belong to the same $G$-orbit $\mathcal{O}$, then 
$\mathcal{Y}(M)=\mathcal{Y}(N)$, we denote this set by $\mathcal{Y}_\mathcal{O}$. 
The cardinality of $\mathcal{Y}_\mathcal{O}$ is 
$\frac{m!\cdot k!\cdot l! }{|\text{stab}_\mathcal{O}|}$. 
Let $\mathcal{Y}$ be union of $\mathcal{Y}_\mathcal{O}$ over the orbits 
$\mathcal{O}$ in $Q^{m}_{k,l}/G$. Note that the set $\mathcal{Y}$ consists 
of $\xi\otimes\zeta$ with $\mathrm{cod}(\xi)\cap \mathrm{cod}(\zeta)=\tau$ 
with $\xi\in\mathbf{L}_\rho$ and $\zeta\in\mathbf{L}_\sigma$, and it has the cardinality:
\begin{equation*}
    m!\cdot\sum_{\mathcal{O}\in Q_{k,l}^m/G} \frac{k!\cdot l!}{|\mathrm{stab}_\mathcal{O}|}.
\end{equation*}
Since the action of $S_\tau$ on $\mathcal{Y}$ is free, dividing
by $|S_\tau|=m!$, gives 
\begin{displaymath}
\sum_{\mathcal{O}\in Q_{k,l}^m/G} \frac{k!\cdot l!}{|\mathrm{stab}_\mathcal{O}|}
=\mathbf{p}_{k,l}^m.
\end{displaymath}
The right hand side of Formula~\eqref{eq5} is obtained using inclusion-exclusion principle
with respect to the requirement that all rows and all columns of elements
in $Q_{k,l}^m$ should be non-zero.
\end{proof}

It would be interesting to have 
a combinatorial formula for $\mathbf{q}_{k,l}^m$. Although it is easy
to determine $\mathbf{q}_{k,l}^m$ in some special cases
(for instance, $\mathbf{q}_{k,l}^{kl}=1$ and 
$\mathbf{q}_{k,l}^{\max\{k,l\}}$ is the number of partitions of
$\max\{k,l\}$ with exactly $\min\{k,l\}$ parts), we do not
know the answer in general.

\begin{example}\label{ex6}
{\em 
Consider the case $k=l=m=n$. In this case, 
$Q_{n,n}^n$ is just the set of all permutation matrices and
hence $Q_{n,n}^n/G$ is a singleton (as any permutation matrix
can be reduced to the identity matrix by permutation of 
rows or columns). This means that we have $\mathbf{q}_{n,n}^n=1$, 
and, moreover,  $\mathbf{p}_{n,n}^n=n!$ which is exactly the multiplicity 
of $\Bbbk[S_n]$ in $\Bbbk[S_n]\otimes_\Bbbk\Bbbk[S_n]$.
}
\end{example}

In general it is easy to see that if $\mathbf{q}^m_{k,l}=1,$ 
then the multiplicity of $\Bbbk[\mathbf{L}_\tau]$ in 
$\Bbbk[\mathbf{L}_\rho]\otimes \Bbbk[\mathbf{L}_\sigma]$ 
is the cardinality of the set $Q^m_{k,l}$.

\subsection{Tensor product of simple modules}\label{s5.4}

The results of Subsection~\ref{s5.3} suggest what should be done
in order to understand tensor products of simple $I_n^*$-modules.
This seems to be a rather difficult problem in the general case.
Below we outline how to apply Subsection~\ref{s5.3} to this problem in more detail.

For $0\leq k,l,m\leq n$, consider the corresponding set $Q_{k,l}^m$.
Fix:
\begin{itemize}
\item a set partition $\rho$ of $\underline{n}$ with 
exactly $k$ parts, 
\item a set partition $\sigma$ of $\underline{n}$ with 
exactly $l$ parts, 
\item a set partition $\tau$ of $\underline{n}$ with 
exactly $m$ parts. 
\end{itemize}
%
Recall the set $\mathcal{Y}$ from the proof of Proposition~5. 
Note that, for $\xi\otimes \zeta\in \mathcal{Y}$, we 
have $\varepsilon_{\tau} (\xi\otimes \zeta)=\xi\otimes \zeta$
and, in fact, $\varepsilon_\tau$ is the maximum 
(w.r.t. $\preceq$) element in the set of all
idempotents in $I_n^*$ that do not annihilate $\xi\otimes \zeta$.
Also, note that $\Bbbk \,\mathcal{Y}$ has the natural structure of a 
$\Bbbk[S_m]$-$\Bbbk[S_k\times S_l]$-bimodule,
where the group $S_m$ acts on the left by permuting the parts of $\tau$, 
the group $S_k$ acts on the right by permuting the parts of $\rho$
and the group $S_l$ acts on the right by permuting the parts of $\sigma$.

Similarly, to \eqref{eq-s4.9-1}, 
for $\lambda\vdash k$, $\mu\vdash l$ and $\nu\vdash m$, we have
\begin{displaymath}
[\mathbf{N}^\lambda\otimes_\Bbbk\mathbf{N}^\mu:\mathbf{N}^\nu]=
[\Bbbk \,\mathcal{Y}\otimes_{\Bbbk[S_k\times S_l]}
(\mathbf{S}^\lambda\otimes_\Bbbk\mathbf{S}^\mu):\mathbf{S}^\nu].
\end{displaymath}
By construction, the bimodule $\Bbbk \,\mathcal{Y}$ splits into 
a direct sum of subbimodules given by varying $M$ from the previous
paragraph inside a fixed $S_k\times S_l$-orbit in $Q_{k,l}^m$.
For a fixed such orbit $\mathcal{O}$, we denote by
$\Bbbk\mathcal{Y}_{\mathcal{O}}$ the corresponding direct summand
of $\Bbbk \,\mathcal{Y}$. Therefore, the real problem is to 
determine the multiplicities
\begin{displaymath}
[\Bbbk \,\mathcal{Y}_{\mathcal{O}}\otimes_{\Bbbk[S_k\times S_l]}
(\mathbf{S}^\lambda\otimes_\Bbbk\mathbf{S}^\mu):\mathbf{S}^\nu],
\end{displaymath}
for each $\mathcal{O}\in Q_{k,l}^m/_{S_k\times S_l}$.
Below we give a more group-theoretic reformulation of this problem
and discuss various examples of different orbits. An explicit solution
in all cases seems to be quite difficult.

In the above construction, assume that 
$\mathcal{O}=(S_k\times S_l)\cdot M$ and 
let $H$ be the stabilizer of $M$ in $S_k\times S_l$. Consider the set 
$A$ of all possible bijections between
the elements of $\underline{m}$ and the ``$1$''-entries in $M$.
Then each element of $H$ gives rise to a permutation of 
the ``$1$''-entries in $M$, defining a homomorphism from $H$
to $S_m$. Moreover, the permutation in $S_m$ corresponding to a
non-trivial element of $H$ is, obviously, non-trivial. Therefore
this homomorphism is injective and thus we can identify
$H$ as a subgroup of $S_m$.

\begin{proposition}\label{prop7}
We have
\begin{displaymath}
\left[\Bbbk \,\mathcal{Y}_{\mathcal{O}}\otimes_{\Bbbk[S_k\times S_l]}
\big(\mathbf{S}^\lambda\otimes_\Bbbk\mathbf{S}^\mu\big):
\mathbf{S}^\nu\right]=
\left[\mathrm{Ind}^{S_m}_H\mathrm{Res}^{S_k\times S_l}_H
\big(\mathbf{S}^\lambda\otimes_\Bbbk\mathbf{S}^\mu\big)
:\mathbf{S}^\nu\right]. 
\end{displaymath}
\end{proposition}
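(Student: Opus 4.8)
The plan is to realize the bimodule $\Bbbk\,\mathcal{Y}_{\mathcal{O}}$ as the one that implements the functor $\mathrm{Ind}^{S_m}_H\circ\mathrm{Res}^{S_k\times S_l}_H$, and then pass to multiplicities by exactness. Recall that this functor, from $\Bbbk[S_k\times S_l]$-mod to $\Bbbk[S_m]$-mod, is given by tensoring over $\Bbbk[S_k\times S_l]$ with the $\Bbbk[S_m]$-$\Bbbk[S_k\times S_l]$-bimodule
\begin{displaymath}
B:=\Bbbk[S_m]\otimes_{\Bbbk[H]}\Bbbk[S_k\times S_l],
\end{displaymath}
where $H$ acts on $\Bbbk[S_m]$ on the right through the embedding $H\hookrightarrow S_m$ fixed in the paragraph preceding the proposition, and on $\Bbbk[S_k\times S_l]$ on the left through the inclusion $H\subseteq S_k\times S_l$. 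Since $\Bbbk$ has characteristic zero, all the group algebras in sight are semi-simple, so the multiplicity of $\mathbf{S}^\nu$ is additive along short exact sequences and the functor $-\otimes_{\Bbbk[S_k\times S_l]}(\mathbf{S}^\lambda\otimes_\Bbbk\mathbf{S}^\mu)$ is exact. Hence it suffices to produce a bimodule isomorphism $\Bbbk\,\mathcal{Y}_{\mathcal{O}}\cong B$.

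To do this I would argue, in the spirit of Subsections~\ref{s3.2} and \ref{s3.3} and of the proof of Lemma~\ref{lem2}, that $\mathcal{Y}_{\mathcal{O}}$ is a single transitive $S_m\times S_k\times S_l$-set ($S_m$ acting on the left, $S_k\times S_l$ on the right) and then compute the stabilizer of a carefully chosen base point. Fix set partitions $\tilde{\rho}_0,\tilde{\sigma}_0$ of $\underline{n}$ with $\tilde{\rho}_0\cap\tilde{\sigma}_0=\tau$ whose incidence matrix represents the orbit $\mathcal{O}$, and choose $\xi_0\in\mathbf{L}_\rho$, $\zeta_0\in\mathbf{L}_\sigma$ with $\mathrm{cod}(\xi_0)=\tilde{\rho}_0$, $\mathrm{cod}(\zeta_0)=\tilde{\sigma}_0$, whose underlying bijections are the ``identity'' ones with respect to fixed identifications of the parts of $\rho$ (resp. $\sigma$) with the rows (resp. columns) of the matrix. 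Transitivity can be obtained either by a direct moving argument (the right $S_k$- and $S_l$-actions make the underlying bijections arbitrary, and the left $S_m$-action then moves the pair of codomains through the orbit $\mathcal{O}$), or, more economically, from the equality $|\mathcal{Y}_{\mathcal{O}}|=\tfrac{m!\,k!\,l!}{|\mathrm{stab}_{\mathcal{O}}|}=\tfrac{|S_m\times S_k\times S_l|}{|H|}$ recorded in the proof of Proposition~\ref{prop5}, once the point stabilizer below is identified.

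The core of the proof — and the step I expect to be the main obstacle — is this stabilizer computation, which requires unwinding composition in $I_n^*$. A triple $(g,h_1,h_2)$ fixes $\xi_0\otimes\zeta_0$ if and only if $g\xi_0=\xi_0 h_1$ and $g\zeta_0=\zeta_0 h_2$ in $I_n^*$. Using $\tau\preceq\tilde{\rho}_0$ one computes that $g\xi_0$ again lies in $\mathbf{L}_\rho$, with codomain the regrouping $g\cdot\tilde{\rho}_0$ of the parts of $\tilde{\rho}_0$ induced by the permutation $g$ of the $m$ parts of $\tau$, and underlying bijection obtained from that of $\xi_0$; comparing with $\xi_0 h_1$ forces $g$ to preserve the partition $\tilde{\rho}_0$ of the $m$ parts of $\tau$ and determines $h_1$ uniquely in terms of $g$. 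The second equation likewise forces $g$ to preserve $\tilde{\sigma}_0$ and determines $h_2$. Now a permutation $g\in S_m$ preserving both partitions $\tilde{\rho}_0$ and $\tilde{\sigma}_0$ of the $1$-entries of $M$ is precisely one induced, via the embedding $H\hookrightarrow S_m$, by an element of $H=\mathrm{Stab}_{S_k\times S_l}(M)$: each $\tau$-part is the unique intersection of its row with its column, so $g$ is reconstructed from the induced row and column permutations, and the condition that $g$ carry $1$-entries to $1$-entries is exactly the condition that this pair of permutations stabilize $M$. With the base point chosen as above, the assignments $g\mapsto h_1$ and $g\mapsto h_2$ become the two projections of the inclusion $H\hookrightarrow S_k\times S_l$, so the stabilizer of $\xi_0\otimes\zeta_0$ equals $\{(\iota(h),h):h\in H\}$ with $\iota$ the fixed embedding $H\hookrightarrow S_m$.

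It follows that $\mathcal{Y}_{\mathcal{O}}\cong(S_m\times S_k\times S_l)/\{(\iota(h),h):h\in H\}$ as $S_m\times S_k\times S_l$-sets; linearizing and reading off the bimodule structure, where the right action of $S_k\times S_l$ is taken through the canonical anti-involution exactly as in Subsections~\ref{s3.2}--\ref{s3.3}, gives $\Bbbk\,\mathcal{Y}_{\mathcal{O}}\cong\Bbbk[S_m]\otimes_{\Bbbk[H]}\Bbbk[S_k\times S_l]=B$. Tensoring over $\Bbbk[S_k\times S_l]$ with $\mathbf{S}^\lambda\otimes_\Bbbk\mathbf{S}^\mu$ and using that $\Bbbk[S_k\times S_l]\otimes_{\Bbbk[S_k\times S_l]}(\mathbf{S}^\lambda\otimes_\Bbbk\mathbf{S}^\mu)=\mathrm{Res}^{S_k\times S_l}_H(\mathbf{S}^\lambda\otimes_\Bbbk\mathbf{S}^\mu)$ then yields the desired identity of multiplicities.
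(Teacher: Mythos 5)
Your proof is correct and takes essentially the same approach as the paper: both produce a bimodule isomorphism $\Bbbk\,\mathcal{Y}_{\mathcal{O}}\cong\Bbbk[S_m]\otimes_{\Bbbk[H]}\Bbbk[G]$ by observing that $\mathcal{Y}_{\mathcal{O}}$ is a transitive $S_m\times G$-set whose point stabilizer is the twisted diagonal copy of $H$. The paper phrases this by exhibiting a surjective bimodule map $e\otimes e\mapsto (M,\psi)$ and comparing dimensions, whereas you compute the stabilizer explicitly and invoke orbit-stabilizer, but the underlying content is identical.
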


\begin{proof}
For $G=S_k\times S_l$,
the $\Bbbk[S_m]$-$\Bbbk[G]$-bimodule corresponding to the functor
$\mathrm{Ind}^{S_m}_H\mathrm{Res}^G_H$ is given by
\begin{displaymath}
B:=\Bbbk[S_m]\otimes_{\Bbbk[H]}\Bbbk[G].
\end{displaymath}
Note that the dimension of this bimodule is $\frac{|S_m||G|}{|H|}$
and that this coincides with the dimension of $\Bbbk \,\mathcal{Y}_{\mathcal{O}}$.
From our definition of $H$, it follows that the
right action of $H$ on $\mathcal{Y}_{\mathcal{O}}$ (i.e. the action in terms of $G$) 
coincides with the left action of $H$ on $\mathcal{Y}_{\mathcal{O}}$ (i.e. the 
action in terms of $S_m$).

Therefore, sending $e\otimes e\in B$ to 
the element in $\mathcal{Y}_{\mathcal{O}}$ corresponding to $(M,\psi)$, 
gives rise to a homomorphism of $\Bbbk[S_m]$-$\Bbbk[G]$-bimodules from $B$ to 
$\Bbbk \,\mathcal{Y}_{\mathcal{O}}$.
Since the action of $S_m\times G$ on $\mathcal{Y}_\mathcal{O}$ is transitive, this
homomorphism is surjective. By comparing the dimensions, we
obtain that this homomorphism is an isomorphism.
\end{proof}

As an immediate corollary of Proposition~\ref{prop5}
and Proposition~\ref{prop7}, we have:

\begin{corollary}\label{cor-prop7}
The multiplicity of $\mathbf{N}^{\nu}$ in 
$\mathbf{N}^{\lambda}\otimes_\Bbbk\mathbf{N}^{\mu}$ 
is given by the sum of the multiplicities in Proposition~\ref{prop7},
taken along some cross-section of $Q_{k,l}^m/_{S_k\times S_l}$.
\end{corollary}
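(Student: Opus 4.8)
The plan is to assemble the two previously established ingredients. First I would recall that, by Proposition~\ref{prop5}, the $I_n^*$-module $\Bbbk[\mathbf{L}_\rho]\otimes_\Bbbk\Bbbk[\mathbf{L}_\sigma]$ admits a cell filtration, and that the bimodule $\Bbbk\,\mathcal{Y}$ built in Subsection~\ref{s5.3} captures precisely the part of the tensor product that feeds into the cell subquotient $\Bbbk[\mathbf{L}_\tau]$. Passing from cell modules to simple modules, this gives the identity
\begin{displaymath}
[\mathbf{N}^\lambda\otimes_\Bbbk\mathbf{N}^\mu:\mathbf{N}^\nu]=
[\Bbbk\,\mathcal{Y}\otimes_{\Bbbk[S_k\times S_l]}
(\mathbf{S}^\lambda\otimes_\Bbbk\mathbf{S}^\mu):\mathbf{S}^\nu],
\end{displaymath}
which is the bridge between the semigroup-theoretic question and the symmetric-group computation.

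Next I would use the $S_k\times S_l$-orbit decomposition of $Q_{k,l}^m$ to split $\Bbbk\,\mathcal{Y}=\bigoplus_{\mathcal{O}}\Bbbk\,\mathcal{Y}_{\mathcal{O}}$ as a direct sum of sub-bimodules indexed by the orbits, so that the multiplicity above becomes the sum over $\mathcal{O}\in Q_{k,l}^m/_{S_k\times S_l}$ of the multiplicities
$[\Bbbk\,\mathcal{Y}_{\mathcal{O}}\otimes_{\Bbbk[S_k\times S_l]}(\mathbf{S}^\lambda\otimes_\Bbbk\mathbf{S}^\mu):\mathbf{S}^\nu]$. Choosing one representative $M$ per orbit amounts to choosing a cross-section of $Q_{k,l}^m/_{S_k\times S_l}$, which is exactly the summation index in the statement. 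Finally, Proposition~\ref{prop7} identifies each orbit summand with $[\mathrm{Ind}^{S_m}_H\mathrm{Res}^{S_k\times S_l}_H(\mathbf{S}^\lambda\otimes_\Bbbk\mathbf{S}^\mu):\mathbf{S}^\nu]$, where $H$ is the stabilizer of $M$ embedded into $S_m$ as described. Substituting this into the sum yields the corollary.

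Since both Proposition~\ref{prop5} and Proposition~\ref{prop7} are already proved in the excerpt, this is genuinely a bookkeeping argument, and there is no serious obstacle; the only point requiring a line of care is that the direct-sum decomposition of $\Bbbk\,\mathcal{Y}$ along orbits is a decomposition of bimodules (the $S_m$-action on the left permutes parts of $\tau$ and does not mix different $M$'s, while the $S_k\times S_l$-action on the right preserves each orbit class by construction), so that taking multiplicities commutes with the sum. Once that is noted, the result is immediate.
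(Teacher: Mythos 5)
Your argument is exactly the chain the paper intends: the identity $[\mathbf{N}^\lambda\otimes_\Bbbk\mathbf{N}^\mu:\mathbf{N}^\nu]=[\Bbbk\,\mathcal{Y}\otimes_{\Bbbk[S_k\times S_l]}(\mathbf{S}^\lambda\otimes_\Bbbk\mathbf{S}^\mu):\mathbf{S}^\nu]$ established in Subsection~\ref{s5.4}, the orbit decomposition $\Bbbk\,\mathcal{Y}=\bigoplus_\mathcal{O}\Bbbk\,\mathcal{Y}_\mathcal{O}$ as bimodules, and Proposition~\ref{prop7} applied to each summand. This matches the paper, which states the corollary as an immediate consequence of Propositions~\ref{prop5} and~\ref{prop7}, and your extra remark about the bimodule-compatibility of the decomposition is a correct and useful clarification.
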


\begin{example}\label{ex8}
{\em 
Consider the extreme case $m=kl$. In this case, $G=H$ is a subgroup of
$S_{m}$ and hence we are interested in the multiplicities 
\begin{displaymath}
\left[\mathrm{Ind}^{S_m}_{S_k\times S_l}
\big(\mathbf{S}^\lambda\otimes_\Bbbk\mathbf{S}^\mu\big):
\mathbf{S}^\nu\right]. 
\end{displaymath}
Note that these are not the Littlewood--Richardson coefficient
as our $S_k\times S_l$ is not a Young subgroup of $S_m$.
This example appears in \cite{Ryba} and will be studied in more 
detail in Section~\ref{s7}.
The extreme example of this case is $k=1$, when we get $H=G=S_m$.
}
\end{example}

\begin{example}\label{ex9}
{\em 
Consider the other extreme case $m=l>k$. Then we can assume that 
the $i$-th row of $M$ has exactly $x_i$ entries equal to $1$
and $x_1\geq x_2\geq \dots \geq x_k$ with $x_1+x_2+\dots +x_k=l$.
In other words, we have a partition $\mathbf{x}$ of $l$
with exactly $k$ parts. Note that each column of $M$
has exactly one entry equal to $1$,  since $m=l$.
Assume that exactly $y_1$ parts of
this partition are equal to $z_1=x_1$, then exactly $y_2$
parts are equal to the next size $z_2$ of the parts in $\mathbf{x}$
and so on. Then the group $H$ is isomorphic to the direct product
$(S_{y_1}\wr S_{z_1})\times(S_{y_2}\wr S_{z_2})\times\cdots$ of
wreath products of symmetric groups. The extreme example of this
case is again $k=1$, when we get $H=G=S_m$.}
%

\end{example}

\begin{example}\label{ex9n}
{\em 
Consider the case $m=(l-1)+(k-1)$ and take $M$ such that 
the first row of $M$ is $(0,1,1,\dots,1)$ and the first 
column of $M$  is $(0,1,1,\dots,1)^t$. Then the remaining
entries of $M$ are zero. In this case
$H\cong S_{k-1}\times S_{l-1}$ is both, a Young subgroup of $G$,
and a Young subgroup of $S_m$. Let us denote by $\to$
the usual branching relation on partitions (i.e. $\xi\to\eta$
means that $\xi$ is obtained from $\eta$ by removing a removable node
in the Young diagram).
Then the multiplicities described in Proposition~\ref{prop7}
are given by the formula
\begin{displaymath}
\sum_{\alpha\to \lambda}\sum_{\beta\to \mu}\mathbf{c}_{\alpha,\beta}^\nu. 
\end{displaymath}
}
\end{example}

\begin{example}\label{ex9nn}
{\em 
Consider the case $l=k$, $m=\frac{l(l+1)}{2}$ and take $M$ 
such that it  is upper triangular. In this case
$H$ is the trivial group (a singleton). Then the multiplicities 
described in Proposition~\ref{prop7} are given by the formula
$\dim(\mathbf{S}^\lambda)\cdot\dim(\mathbf{S}^\mu)\cdot\dim(\mathbf{S}^\nu)$.
}
\end{example}

\section{The inverse 
semigroup of all bijections between subquotients 
of a finite set}\label{s6}

\subsection{The semigroup  $PI_n^*$}\label{s6.1}

Consider the semigroup $PI_n^*$ defined in  \cite[Subsection~2.2]{KM}. 
The elements of  $PI_n^*$ are all possible bijections
between quotients of subsets of $\underline{n}$ (equivalently, 
between subsets of quotients of $\underline{n}$) and the 
operation is the natural version of composition for such bijections.
Here is an example for $n=8$, where composition is from right to left and
the elements are presented using the two-row notation for functions:

\resizebox{\textwidth}{!}{
$
\left(\begin{array}{cccc}\{1\}&\{3,5\}&\{6,7\}&\{8\}\\
\{1,2\}&\{3,4\}&\{5\}&\{6,8\}
\end{array}\right) \circ
\left(\begin{array}{cccc}\{2,3\}&\{4\}&\{5,6\}&\{7,8\}\\
\{1,2\}&\{3\}&\{5\}&\{6,7,8\}\end{array}\right) =
\left(\begin{array}{cc}\{4,5,6\}&\{7,8\}\\\{3,4\}&\{5,6,8\}\end{array}\right) 
$
}

The main point in this multiplication is that, as soon some element $x$ in the
product hits an undefined part of the other element, all elements connected
to $x$ become undefined. In the above example, this happens for $x=2$ in the
right factor. We do refer the reader to \cite[Subsection~2.2]{KM} for the
full definition which is very technical. The semigroup $PI_n^*$ is, in some 
sense, a mixture between  $IS_n$ and $I_n^*$.

Each element $\xi$ in $PI_n^*$ is thus uniquely given by the following
data:
\begin{itemize}
\item a set partition $\xi_d$ of some subset $X$ of $\underline{n}$, called 
the {\em domain} of $\xi$;
\item a set partition $\xi_r$ of some subset $Y$ of $\underline{n}$
(with the same number of parts as $\xi_d$), called 
the {\em codomain} of $\xi$;
\item a bijection $\overline{\xi}$ from $X/\xi_d$ to $Y/\xi_r$.
\end{itemize}
The cardinality of $X/\xi_d$ is called the 
{\em rank} of $\xi$ and denoted $\mathrm{rank}(\xi)$.
We also call the same number the {\em rank} of $\xi_d$.

Let $\xi,\zeta\in PI_n^*$. Recall, see for example 
\cite[Proposition~3]{KM}, the following description of Green's relations
$\mathcal{L}$, $\mathcal{R}$, $\mathcal{H}$ and $\mathcal{D}=\mathcal{J}$
for $PI_n^*$:
\begin{itemize}
\item $\xi\mathcal{L}\zeta$ if and only if $\mathrm{dom}(\xi)=\mathrm{dom}(\zeta)$,
\item $\xi\mathcal{R}\zeta$ if and only if $\mathrm{cod}(\xi)=\mathrm{cod}(\zeta)$,
\item $\xi\mathcal{H}\zeta$ if and only if $\mathrm{dom}(\xi)=\mathrm{dom}(\zeta)$
and $\mathrm{cod}(\xi)=\mathrm{cod}(\zeta)$,
\item $\xi\mathcal{J}\zeta$ if and only if $\mathrm{rank}(\xi)=\mathrm{rank}(\zeta)$.
\end{itemize}

The idempotents of $PI_n^*$ are naturally identified with equivalence
relations on subsets of $\underline{n}$: given an equivalence relation
$\rho$ on some subset $X$ of $\underline{n}$, the corresponding 
idempotent $\varepsilon_\rho$ is the identity map on  $X/\rho$. 
The maximal subgroup of $PI_n^*$ corresponding to 
$\rho$ is the symmetric group $S_\rho$ on the equivalence classes of $\rho$.

The semigroup $PI_k^*$ appears in the Schur-Weyl  duality for the $k$-th tensor 
power of the direct sum of the natural and the trivial representations 
of $IS_n$, see \cite[Theorem~2]{KMaz}.

\subsection{Cells and simple $PI_n^*$-modules}\label{s6.2}

Let $\rho$ be an equivalence relation on some subset $X$ of  $\underline{n}$.
Let $\mathbf{L}_\rho$ be a left cell (i.e. an $\mathcal{L}$-equivalence class) 
in  $PI_n^*$ containing $\varepsilon_\rho$.

The linearization $\Bbbk[\mathbf{L}_\rho]$ is naturally a left $PI_n^*$-module 
with the  action given as follows, for $\xi\in PI_n^*$ and 
$\zeta\in \mathbf{L}_\rho$:
\begin{displaymath}
\xi\cdot\zeta=
\begin{cases}
\xi\zeta,& \text{ if } \xi\zeta\in \mathbf{L}_\rho;\\
0,& \text{ otherwise.}
\end{cases}
\end{displaymath}
This is the cell $PI_n^*$-module corresponding to $\mathbf{L}_\rho$.

The maximal subgroup $S_\rho$ acts on $\mathbf{L}_\rho$ by multiplication 
on the right. This equips $\Bbbk[\mathbf{L}_\rho]$ with the natural 
structure of a $\Bbbk[PI_n^*]$-$\Bbbk[S_\rho]$-bimodule.

For any $\lambda\vdash \mathrm{rank}(\varepsilon_\rho)$, 
the $\Bbbk[PI_n^*]$-module
\begin{displaymath}
\mathbf{N}^{\lambda}:=\Bbbk[\mathbf{L}_\rho]\bigotimes_{\Bbbk[S_\rho]}
\mathbf{S}^\lambda
\end{displaymath}
is simple. Up to isomorphism, it does not depend on the choice of 
$\mathbf{L}_\rho$ inside its $\mathcal{J}$-cell (i.e. for the fixed rank of 
$\varepsilon_\rho$). Moreover, up to isomorphism,
each simple $\Bbbk[PI_n^*]$-module has such a form,  for some 
$\rho$ and $\lambda$. We refer to \cite{St} for details.

\subsection{Tensor product of cell  modules}\label{s6.3}

Let $\rho$ and $\sigma$ be two set partitions 
(a.k.a. equivalence relations) of two subsets $X$ and $Y$ of $\underline{n}$, respectively.
The tensor product
\begin{displaymath}
\Bbbk[\mathbf{L}_\rho]\otimes_\Bbbk\Bbbk[\mathbf{L}_\sigma]
\end{displaymath}
is an $PI_n^*$-module via the diagonal action of $PI_n^*$. 
In this subsection we are going to show that 
this module has a filtration whose subquotients are isomorphic to cell
modules. Also, we are going to describe a combinatorial object which
determines the corresponding multiplicities (just like in the cases
of $IS_n$ and $I_n^*$, the fact that these multiplicities are well-defined
follows directly from the semi-simplicity of the semigroup algebra
of $PI_n^*$).

Set $k=\mathrm{rank}(\varepsilon_\rho)$, $l=\mathrm{rank}(\varepsilon_\sigma)$.
Let $U$ be a subset of $\underline{n}$ of cardinality at least $k$,
and $\tilde{\rho}$ a set partiton of $U$ of the same rank as $\rho$. 
Let $V$ be a subset of $\underline{n}$ of cardinality at least $l$,
and $\tilde{\sigma}$ a set partiton of $V$ of the same rank as $\sigma$. 
Consider the set partition $\tau$ of $U\cup V$ whose parts are
exactly the following sets:
\begin{itemize}
\item all non-empty  intersections of a part of $\tilde{\rho}$ with 
a part of $\tilde{\sigma}$;
\item all non-empty  intersections of a part of $\tilde{\rho}$ with 
$\underline{n}\setminus V$;
\item all non-empty  intersections of a part of $\tilde{\sigma}$ with 
$\underline{n}\setminus U$.
\end{itemize}

Let $\xi\in\mathbf{L}_\rho$ and $\zeta\in\mathbf{L}_\sigma$
be such that $\mathrm{cod}(\xi)=\tilde{\rho}$ and 
$\mathrm{cod}(\zeta)=\tilde{\sigma}$. Then $\tau$ is the maximum
element in the set of all set partitions $\omega$ of $U\cup V$
such that $\varepsilon_\omega\xi\in \mathbf{L}_\rho$
and $\varepsilon_\omega\zeta\in\mathbf{L}_\sigma$.
Note also that the action of $S_\tau$ on $\xi\otimes \zeta$ is, clearly, free.
All this means exactly that the element
$\xi\otimes \zeta$ contributes to a cell subquotient
$\Bbbk[\mathbf{L}_\tau]$ of 
$\Bbbk[\mathbf{L}_\rho]\otimes_\Bbbk\Bbbk[\mathbf{L}_\sigma]$.

To provide a combinatorial object for counting the multiplicities,
we need to invert our problem. Given some $\tau$, we need to find
the number of all possible $\tilde{\rho}$ and $\tilde{\sigma}$
as above which are connected to $\tau$ also in the above sense.

Set $m:=\mathrm{rank}(\varepsilon_\tau)$.
Let $\tilde{Q}_{k,l}^{m}$ be the set of all $0/1$-matrices 
whose rows are indexed by the elements in 
$\{0,1,2,\dots,k\}$ and whose columns are 
indexes by the elements in  $\{0,1,2,\dots,l\}$
satisfying the following conditions:
\begin{itemize}
\item the $(0,0)$-entry is $0$;
\item exactly $m$ entries are equal to $1$;
\item all rows with positive indices are non-zero;
\item all columns with positive indices are non-zero.
\end{itemize}
The group $G:=S_k\times S_l$ acts on $\tilde{Q}_{k,l}^m$
by (independent) permutation of rows and columns
with positive indices. Consider
the corresponding orbit set $\tilde{Q}_{k,l}^m/G$ and set
$\tilde{\mathbf{p}}_{k,l}^m:=|\tilde{Q}_{k,l}^m|$ and
$\tilde{\mathbf{q}}_{k,l}^m:=|\tilde{Q}_{k,l}^m/G|$. Note that,
for $\tilde{\mathbf{q}}_{k,l}^m$ to be non-zero, we must 
have the inequality $\max\{k,l\}\leq m\leq (k+1)(l+1)-1$.
It would be interesting to have 
a combinatorial formula for $\tilde{\mathbf{q}}_{k,l}^m$.

\begin{proposition}\label{prop6-1}
The module $\Bbbk[\mathbf{L}_\rho]\otimes_\Bbbk\Bbbk[\mathbf{L}_\sigma]$ has
a filtration whose subquotients are isomorphic to cell modules.
Moreover, the multiplicity of $\Bbbk[\mathbf{L}_\tau]$ in
$\Bbbk[\mathbf{L}_\rho]\otimes_\Bbbk\Bbbk[\mathbf{L}_\sigma]$
equals 
\begin{equation}\label{eq5n}
\tilde{\mathbf{p}}_{k,l}^m=
\sum_{i=0}^k\sum_{j=0}^l
(-1)^{k+l-i-j}\binom{k}{i}\binom{l}{j}\binom{(i+1)(j+1)-1}{m}
\end{equation}
\end{proposition}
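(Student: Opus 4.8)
The plan is to follow the same strategy as in the proof of Proposition~\ref{prop5}, making the necessary modifications to account for the fact that elements of $\mathbf{L}_\rho$ now have codomains which are set partitions of arbitrary subsets of $\underline{n}$ (of the appropriate rank), rather than set partitions of all of $\underline{n}$. Existence of the cell filtration is immediate from the fact, stated just above, that the action of $S_\tau$ on the relevant elements $\xi\otimes\zeta$ is free. So the entire content is the multiplicity count, and for that I would fix $\tau$ (equivalently, fix a set partition of $U\cup V$ of rank $m$) and count the number of pairs $\xi\otimes\zeta$ with $\xi\in\mathbf{L}_\rho$, $\zeta\in\mathbf{L}_\sigma$, $\mathrm{cod}(\xi)=\tilde\rho$, $\mathrm{cod}(\zeta)=\tilde\sigma$ for varying $\tilde\rho,\tilde\sigma$ whose associated triple-intersection partition equals $\tau$.

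First I would set up the bookkeeping matrix. Each part of $\tau$ is of exactly one of three types: a genuine intersection of a part of $\tilde\rho$ with a part of $\tilde\sigma$; a ``leftover'' part of $\tilde\rho$ lying in $\underline{n}\setminus V$; or a ``leftover'' part of $\tilde\sigma$ lying in $\underline{n}\setminus U$. Encoding the $k$ parts of $\tilde\rho$ by row indices $1,\dots,k$, the $l$ parts of $\tilde\sigma$ by column indices $1,\dots,l$, and adjoining a ``row $0$'' and ``column $0$'' to absorb the two kinds of leftovers, a choice of which parts of $\tau$ go where is exactly a $0/1$-matrix in $\tilde Q_{k,l}^m$: the $(0,0)$-entry must be $0$ (there is no part of $\tau$ disjoint from both $\tilde\rho$ and $\tilde\sigma$), there are exactly $m$ ones (one per part of $\tau$), every row with positive index is nonzero (every part of $\tilde\rho$ is nonempty, hence meets $\tau$ somewhere), and similarly for columns. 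Then, exactly as in the proof of Proposition~\ref{prop5}: given $M\in\tilde Q_{k,l}^m$ there are $m!$ bijections $\psi$ between the parts of $\tau$ and the $1$-entries of $M$; fixing $\psi$ one reads off $\tilde\rho$ by uniting the parts of $\tau$ indexed along each positive-index row (together with the entries in column $0$), one reads off $\tilde\sigma$ by uniting along each positive-index column (together with row $0$), and this determines $\mathrm{cod}(\xi)$ and $\mathrm{cod}(\zeta)$ up to the ordering of their parts, which accounts for the $G=S_k\times S_l$-action. It remains to choose $\overline\xi$ in $k!$ ways and $\overline\zeta$ in $l!$ ways. Summing $\frac{m!\,k!\,l!}{|\mathrm{stab}_{\mathcal O}|}$ over the orbits $\mathcal O\in\tilde Q_{k,l}^m/G$, then dividing by $|S_\tau|=m!$ because the $S_\tau$-action on the resulting set is free, gives
\begin{displaymath}
\sum_{\mathcal O\in \tilde Q_{k,l}^m/G}\frac{k!\,l!}{|\mathrm{stab}_{\mathcal O}|}=|\tilde Q_{k,l}^m|=\tilde{\mathbf{p}}_{k,l}^m,
\end{displaymath}
which is the multiplicity of $\Bbbk[\mathbf{L}_\tau]$.

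Finally I would establish the closed formula \eqref{eq5n} by inclusion--exclusion. Without the nonvanishing constraints on the positive-index rows and columns, choosing an element of the ambient matrix set amounts to choosing $m$ ones among the $(i+1)(j+1)-1$ admissible cells once we have selected which $i$ of the $k$ positive rows and which $j$ of the $l$ positive columns are allowed to be used; the count is $\binom{(i+1)(j+1)-1}{m}$ (the $(0,0)$-cell remains forbidden, which is where the $-1$ comes from). Summing with signs $(-1)^{k+l-i-j}\binom{k}{i}\binom{l}{j}$ over $0\le i\le k$, $0\le j\le l$ implements the requirement that no positive-index row and no positive-index column be zero, yielding \eqref{eq5n}.

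\textbf{Main obstacle.} I expect essentially no serious obstacle: the argument is a routine adaptation of Proposition~\ref{prop5}. The one point requiring genuine care is the correct set-up of the auxiliary matrix --- in particular, verifying that the three bullet types describing the parts of $\tau$ correspond precisely to the three allowed regions of $\tilde Q_{k,l}^m$ (positive$\times$positive, positive$\times$zero, zero$\times$positive) and that forbidding the $(0,0)$-entry is exactly right, so that the bijection between valid configurations $(\tilde\rho,\tilde\sigma,\psi)$ and pairs $(M,\psi)$ with $M\in\tilde Q_{k,l}^m$ is genuinely a bijection; everything else, including the dimension-count upgrade of this combinatorial identity, then goes through verbatim.
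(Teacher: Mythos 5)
Your proposal is correct and is exactly the argument the paper has in mind: the paper's own proof of Proposition~\ref{prop6-1} is the one-line ``mutatis mutandis the proof of Proposition~\ref{prop5},'' and what you have written is precisely a careful spelling-out of the mutanda — the role of the adjoined $0$-th row and column as receptacles for the parts of $\tau$ coming from $\underline{n}\setminus V$ and $\underline{n}\setminus U$, the forbidden $(0,0)$-cell, the $G$-action only on positive-index rows and columns, and the corresponding adjustment of the inclusion--exclusion count. No discrepancy or gap.
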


\begin{proof}
Mutatis mutandis the proof of Proposition~\ref{prop5}.
\end{proof}

\begin{example}\label{ex6-2}
{\em 
Consider the case $k=m=l=n$. In this case, 
$\tilde{Q}_{n,n}^n$ is just the set of all permutation matrices 
on positive indices (and both the $0$-th row and the $0$-th column are zero).
Hence $\tilde{Q}_{n,n}^n/G$ is a singleton 
and thus  $\tilde{\mathbf{q}}_{n,n}^n=1$. 
We also have $\tilde{\mathbf{p}}_{n,n}^n=n!$,
which is exactly the multiplicity 
of $\Bbbk[S_n]$ in $\Bbbk[S_n]\otimes_\Bbbk\Bbbk[S_n]$.
}
\end{example}

\subsection{Tensor product of simple modules}\label{s6.4}

Similarly to the case of the dual symmetric inverse monoid,
the results of Subsection~\ref{s6.3} suggest what should be done
in order to understand tensor products of simple $PI_n^*$-modules.
This seems to be a rather difficult problem. Below we outline
how to apply Subsection~\ref{s6.3} to this problem in more detail.

Let $\rho$ be a set partition of $X\subset\underline{n}$ 
with $k$ parts and $\sigma$ a set partition of
$Y\subset\underline{n}$ with $l$ parts. Let $\tau$ be
the set partition of $X\cup Y$ given by intersecting 
the parts of $\rho$ with the parts of $\sigma$ and,
additionally, the parts of $\rho$ with $\underline{n}\setminus Y$
and the parts of $\sigma$ with $\underline{n}\setminus X$. 
Then we have $\varepsilon_{\tau} (\varepsilon_{\rho}\otimes 
\varepsilon_{\sigma})=\varepsilon_{\rho}\otimes \varepsilon_{\sigma}$
and, in fact, $\varepsilon_\tau$ is the maximum 
(w.r.t. $\preceq$) element in the set of all
idempotents in $PI_n^*$ that do not annihilate 
$\varepsilon_{\rho}\otimes \varepsilon_{\sigma}$.
Assume that $\tau$ has exactly $m$ parts.

Fix some linear 
orderings for the parts of $\rho$ and for the parts of $\sigma$.
Then we have the matrix $M\in \tilde{Q}_{k,l}^m$ defined as follows:
\begin{itemize}
\item For positive $i$ and $j$, the $(i,j)$-th entry of 
$M$ is $1$ if and only if the intersection of the $i$-th part of 
$\rho$ with the $j$-th part of $\sigma$ is not empty;
\item The $(0,j)$-th entry of $M$ is $1$ if and only if the 
intersection of the $j$-th part of $\sigma$ with 
$\underline{n}\setminus X$ is not empty;
\item The $(i,0)$-th entry of $M$ is $1$ if and only if the 
intersection of the $i$-th part of $\rho$ with 
$\underline{n}\setminus Y$ is not empty.
\end{itemize}

Consider the $G$-orbit $G\cdot M$ of $M$ inside $\tilde{Q}_{k,l}^m$
and denote by $\mathbf{M}$ the set of all possible bijective 
decorations of the ``$1$''-entries of the elements in
$G\cdot M$ by the elements of $\underline{m}$. Clearly, 
both $S_m$ and $G$ act on $\mathbf{M}$ and these two actions
commute. Hence the linearization $\Bbbk[\mathbf{M}]$ of 
$\mathbf{M}$ becomes an $S_m$-$G$-bimodule.

Let $\lambda\vdash k$, $\mu\vdash l$ and $\nu\vdash m$.
In order to understand tensor products of simple $PI_n^*$-modules,
Proposition~\ref{prop6-1} (more precisely,  it's proof based on the
proof of Proposition~\ref{prop5}) implies that 
we need to understand the multiplicity of $\mathbf{S}^\nu$ in the
module
\begin{displaymath}
\Bbbk[\mathbf{M}]\otimes_{\Bbbk[G]}
\big(\mathbf{S}^\lambda\otimes_\Bbbk\mathbf{S}^\mu\big).
\end{displaymath}

Let $H$ be the stabilizer of $M$ in $G$. Consider the set 
$A$ of all possible bijections between
the elements of $\underline{m}$ and the ``$1$''-entries in $M$.
Then each element of $H$ gives rise to a permutation of 
the ``$1$''-entries in $M$, defining a homomorphism from $H$
to $S_m$. Moreover, the permutation in $S_m$ corresponding to a
non-trivial element of $H$ is, obviously, non-trivial. Therefore
this homomorphism is injective and thus we can canonically identify
$H$ as a subgroup of $S_m$.

\begin{proposition}\label{prop6.3n}
We have
\begin{displaymath}
\left[\Bbbk[\mathbf{M}]\otimes_{\Bbbk[G]}
\big(\mathbf{S}^\lambda\otimes_\Bbbk\mathbf{S}^\mu\big):
\mathbf{S}^\nu\right]=
\left[\mathrm{Ind}^{S_m}_H\mathrm{Res}^G_H
\big(\mathbf{S}^\lambda\otimes_\Bbbk\mathbf{S}^\mu\big)
:\mathbf{S}^\nu\right]. 
\end{displaymath}
\end{proposition}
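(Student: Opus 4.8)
The plan is to mimic the proof of Proposition~\ref{prop7} essentially verbatim, since the combinatorial setup for $PI_n^*$ in Subsection~\ref{s6.4} has been arranged to run parallel to the one for $I_n^*$ in Subsection~\ref{s5.4}. The functor $\mathrm{Ind}^{S_m}_H\mathrm{Res}^G_H$ is realized by the $\Bbbk[S_m]$-$\Bbbk[G]$-bimodule $B:=\Bbbk[S_m]\otimes_{\Bbbk[H]}\Bbbk[G]$, which has dimension $\frac{|S_m|\,|G|}{|H|}$. First I would check that $\Bbbk[\mathbf{M}]$ has the same dimension: the underlying set $\mathbf{M}$ consists of all bijective decorations of the ``$1$''-entries of the elements of the $G$-orbit $G\cdot M$, so $|\mathbf{M}| = |G\cdot M|\cdot m! = \frac{|G|}{|H|}\cdot m!$, which matches.

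Next I would exhibit the bimodule isomorphism. The key observation is that, by the definition of the embedding $H\hookrightarrow S_m$, the right action of $h\in H$ on the fixed element $(M,\psi)\in\mathbf{M}$ (via $G$, permuting rows and columns) agrees with the left action of the corresponding permutation $\bar{h}\in S_m$ on $(M,\psi)$ (relabelling the decoration). Therefore the assignment $e\otimes e\mapsto (M,\psi)$ is well-defined modulo the relation $e\otimes h g = \bar h\,e\otimes g$ used to form the tensor product over $\Bbbk[H]$, and hence extends to a homomorphism of $\Bbbk[S_m]$-$\Bbbk[G]$-bimodules $B\to\Bbbk[\mathbf{M}]$. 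Since $S_m\times G$ acts transitively on $\mathbf{M}$ (the $G$-action sweeps out the whole orbit $G\cdot M$ and, for fixed matrix, the $S_m$-action is transitive on decorations), the image of $(M,\psi)$ generates $\Bbbk[\mathbf{M}]$, so the homomorphism is surjective; comparing the dimensions computed above shows it is an isomorphism. Taking $-\otimes_{\Bbbk[G]}(\mathbf{S}^\lambda\otimes_\Bbbk\mathbf{S}^\mu)$ and reading off the multiplicity of $\mathbf{S}^\nu$ then yields the claimed equality.

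The only genuine point requiring care — and the step I expect to be the main obstacle — is verifying that the right $H$-action via $G$ really does coincide, under the identification $H\subset S_m$, with the left $H$-action via $S_m$ on the \emph{particular} element $(M,\psi)$; this is exactly what makes the two sides of the tensor-relation match up. But this is precisely how the embedding $H\hookrightarrow S_m$ was defined in the paragraph preceding the proposition: an element of $H$ stabilizes $M$ as a matrix, hence induces a permutation of its ``$1$''-entries, and through $\psi$ this is a permutation of $\underline m$; by construction, applying $h$ on the right (moving rows/columns) and then reading the decoration is the same as first reading the decoration and then applying $\bar h$ on the left. Once this compatibility is recorded, the rest is the formal Frobenius-reciprocity-style dimension count, identical to the proof of Proposition~\ref{prop7}, and I would simply write ``\emph{mutatis mutandis} the proof of Proposition~\ref{prop7}'' after spelling out the dimension bookkeeping $|\mathbf{M}| = \frac{|S_m|\,|G|}{|H|}$.
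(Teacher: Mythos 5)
Your proposal is correct and is exactly the paper's approach: the paper's proof of Proposition~\ref{prop6.3n} literally reads ``Mutatis mutandis the proof of Proposition~\ref{prop7},'' and you have spelled out precisely what that transfer amounts to — the dimension count $|\mathbf{M}| = |G\cdot M|\cdot m! = \tfrac{|G|\,|S_m|}{|H|}$, the compatibility of the right $H$-action via $G$ with the left $H$-action via $S_m$ on $(M,\psi)$, transitivity of $S_m\times G$ on $\mathbf{M}$, and the resulting bimodule isomorphism $B\cong\Bbbk[\mathbf{M}]$. Nothing is missing or different from the intended argument.
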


\begin{proof}
Mutatis mutandis the proof of Proposition~\ref{prop7}.
\end{proof}

\begin{corollary}\label{cor-prop6.3n}
The multiplicity of $\mathbf{N}^{\nu}$ in 
$\mathbf{N}^{\lambda}\otimes_\Bbbk\mathbf{N}^{\mu}$ 
is given by the sum of the multiplicities in Proposition~\ref{prop6.3n},
taken along some cross-section of $\tilde{Q}_{k,l}^m/G$.
\end{corollary}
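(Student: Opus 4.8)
The plan is to run, in the present setting, exactly the argument by which Corollary~\ref{cor-prop7} was deduced from Proposition~\ref{prop5} and Proposition~\ref{prop7}, now using Proposition~\ref{prop6-1} in place of Proposition~\ref{prop5} and Proposition~\ref{prop6.3n} in place of Proposition~\ref{prop7}. First I would record the analogue of \eqref{eq-s4.9-1}. Writing $S_\rho\cong S_k$ and $S_\sigma\cong S_l$, one has the standard identification
\begin{displaymath}
\mathbf{N}^\lambda\otimes_\Bbbk\mathbf{N}^\mu\cong
\big(\Bbbk[\mathbf{L}_\rho]\otimes_\Bbbk\Bbbk[\mathbf{L}_\sigma]\big)
\otimes_{\Bbbk[S_\rho]\otimes_\Bbbk\Bbbk[S_\sigma]}
\big(\mathbf{S}^\lambda\otimes_\Bbbk\mathbf{S}^\mu\big).
\end{displaymath}
Since $-\otimes_{\Bbbk[S_k\times S_l]}(\mathbf{S}^\lambda\otimes_\Bbbk\mathbf{S}^\mu)$ is exact, the cell filtration of $\Bbbk[\mathbf{L}_\rho]\otimes_\Bbbk\Bbbk[\mathbf{L}_\sigma]$ from Proposition~\ref{prop6-1} shows that $[\mathbf{N}^\lambda\otimes_\Bbbk\mathbf{N}^\mu:\mathbf{N}^\nu]$ is computed from the subbimodule of $\Bbbk[\mathbf{L}_\rho]\otimes_\Bbbk\Bbbk[\mathbf{L}_\sigma]$ responsible for the cell subquotients isomorphic to $\Bbbk[\mathbf{L}_\tau]$ with $\tau$ of rank $m$; as before, this is well-defined because $\Bbbk[PI_n^*]$ is semi-simple and non-isomorphic cell modules share no simple summand.

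Next I would isolate that subbimodule, following Subsection~\ref{s6.4}. Fix a set partition $\tau$ of rank $m$ and let $\mathcal{Y}$ be the set of all $\xi\otimes\zeta$ with $\xi\in\mathbf{L}_\rho$, $\zeta\in\mathbf{L}_\sigma$ and $\mathrm{cod}(\xi)\cap\mathrm{cod}(\zeta)=\tau$ in the sense of Subsection~\ref{s6.3}. Then $\Bbbk[\mathcal{Y}]$ is an $\Bbbk[S_m]$-$\Bbbk[S_k\times S_l]$-bimodule, and since the $S_\tau$-action on such $\xi\otimes\zeta$ is free (this is the input to Proposition~\ref{prop6-1}) one obtains
\begin{displaymath}
[\mathbf{N}^\lambda\otimes_\Bbbk\mathbf{N}^\mu:\mathbf{N}^\nu]=
\big[\Bbbk[\mathcal{Y}]\otimes_{\Bbbk[S_k\times S_l]}
\big(\mathbf{S}^\lambda\otimes_\Bbbk\mathbf{S}^\mu\big):\mathbf{S}^\nu\big].
\end{displaymath}
Partitioning the underlying ``$1$''-patterns by $G$-orbits in $\tilde{Q}_{k,l}^m$ gives $\Bbbk[\mathcal{Y}]=\bigoplus_{\mathcal{O}}\Bbbk[\mathbf{M}_\mathcal{O}]$, where $\mathbf{M}_\mathcal{O}$ is the set of decorated matrices attached to $\mathcal{O}$ exactly as in Subsection~\ref{s6.4}. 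Applying Proposition~\ref{prop6.3n} to each summand and summing over a cross-section of $\tilde{Q}_{k,l}^m/G$ then yields the stated formula.

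The one step that genuinely has to be carried out ``mutatis mutandis'' rather than quoted is the bijective bookkeeping behind $\Bbbk[\mathcal{Y}]=\bigoplus_{\mathcal{O}}\Bbbk[\mathbf{M}_\mathcal{O}]$: one must verify that letting $M$ range over $\tilde{Q}_{k,l}^m$, together with all bijective decorations of its ``$1$''-entries, reproduces precisely the pairs $(\xi,\zeta)$ with $\mathrm{cod}(\xi)\cap\mathrm{cod}(\zeta)=\tau$; that the left $S_m$-action (permuting parts of $\tau$) and the right $S_k\times S_l$-action (permuting parts of $\rho$ and of $\sigma$) correspond under this identification; and, crucially for $PI_n^*$, that the extra $0$-indexed row and column of the matrices correctly account for the parts of $\tau$ cut out by $\underline{n}\setminus V$ and $\underline{n}\setminus U$. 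This is the $PI_n^*$-analogue of the counting in the proof of Proposition~\ref{prop5}, and it is where I expect the only real work to lie; once it is in place, independence of the final answer on the chosen $\tau$ within its $\mathcal{J}$-class and on the chosen cross-section is automatic, since different such choices yield isomorphic cell modules and conjugate stabilizers.
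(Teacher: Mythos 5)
Your proposal is correct and takes essentially the same route as the paper, which states the corollary as immediate from Propositions~\ref{prop6-1} and~\ref{prop6.3n} in exact parallel with how Corollary~\ref{cor-prop7} was derived from Propositions~\ref{prop5} and~\ref{prop7}. In particular, you correctly identify that the only genuine work is the bookkeeping showing that decorated matrices in $\tilde{Q}_{k,l}^m$ (including the $0$-indexed row and column accounting for parts cut out by $\underline{n}\setminus V$ and $\underline{n}\setminus U$) biject with pairs $\xi\otimes\zeta$ giving $\tau$, compatibly with the $S_m$- and $S_k\times S_l$-actions, which is precisely the ``mutatis mutandis'' content the paper leaves implicit.
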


\begin{example}\label{ex6-4n}
{\em 
Consider the case where $m=k+l$ and $M$ is the matrix in which all ``1''-entries are
either in the $0$-th row or in the $0$-th column. In this case, $G=H$ is a 
Young subgroup of $S_{m}$ and hence the multiplicities in 
Proposition~\ref{prop6.3n} we are interested in are exactly 
the Littlewood--Richardson coefficients $\mathbf{c}_{\lambda,\mu}^\nu$.
}
\end{example}

\begin{example}\label{ex6-4nn}
{\em 
Consider the case $k=l$ and $m=3k$, where $M$ is the matrix in which 
all ``1''-entries are either in the $0$-row or in the $0$-column or on
the main diagonal. In this case, $H$ is isomorphic to $S_k$ and is diagonally
embedded both into $G$ and into the Young subgroup of $S_{m}$ isomorphic to 
$S_k\times S_k\times S_k$. Since these embeddings are diagonal, the
restriction from $G$ to $H$ and the
induction from $H$ to $S_k\times S_k\times S_k$ are computed using
Kronecker coefficients. After that, the induction from 
$S_k\times S_k\times S_k$ to $S_m$ is computed using 
Littlewood--Richardson coefficients. Consequently, the multiplicities in 
Proposition~\ref{prop6.3n} are given by the following expression:
\begin{displaymath}
\sum_{\alpha\vdash k}\sum_{\beta\vdash k}
\sum_{\gamma\vdash k}\sum_{\delta\vdash k}
\sum_{\epsilon\vdash k}\sum_{\kappa\vdash 2k}
\mathbf{c}_{\epsilon,\kappa}^\nu
\mathbf{c}_{\gamma,\delta}^\kappa
\mathbf{g}_{\delta,\epsilon}^\beta
\mathbf{g}_{\beta,\gamma}^\alpha
\mathbf{g}_{\lambda,\mu}^\alpha.
\end{displaymath}
}
\end{example}

\section{Branching related to $S_k\times S_l$ as a subgroup of 
$S_{kl}$}\label{s7}

\subsection{Setup}\label{s7.1}

In this section we look at Example~\ref{ex8} in more detail.
Let $m=kl$ and let $M$ be the $k\times l$ matrix in which all
entries are equal to $1$. We fix the bijection between $\underline{m}$
and the entries of this matrix which reads the entries left-to-right
and then top-to-bottom. The group $S_k$ acts by permuting the rows of 
$M$, which identifies it as a subgroup of $S_m$. The group $S_l$ acts by 
permuting the columns of $M$, which identifies it as a subgroup of $S_m$. 
Put together, we have identified $S_k\times S_l$ as a subgroup of 
$S_{kl}$.

For $\lambda\vdash k$, $\mu\vdash l$ and $\nu\vdash kl$, denote by
$\mathbf{b}^\nu_{\lambda,\mu}$ the multiplicity of 
$\mathbf{S}^\lambda\otimes_\Bbbk\mathbf{S}^\mu$ in the restriction of
$\mathbf{S}^\nu$ from $S_{kl}$ to $S_k\times S_l$.

\subsection{Partition algebra}\label{s7.2}

For $n\geq 0$ and $x\in\Bbbk\setminus\{0\}$ consider the corresponding 
partition algebra $\mathcal{P}_n(x)$, as defined in \cite{Jo,Mar}.
It is a $\Bbbk$-algebra that has a basis consisting of 
set partitions of two (the upper and the lower) copies of $\underline{n}$. Such set 
partitions are usually represented by certain diagrams
(partition diagrams) whose connected components, viewed as graphs, represent
the parts of the partition, for example:
\begin{displaymath}
\xymatrix{1\ar@{-}[d]&2\ar@{-}[dr]&3\ar@/_5pt/@{-}[r]&4&
5\ar@{-}[dr]&6\ar@{-}[dl]&7\\
1&2&3\ar@/^5pt/@{-}[r]&4\ar@/^5pt/@{-}[r]&5&6\ar@/^5pt/@{-}[r]&7
}
\end{displaymath}
Note that such a diagram representing a partition is not unique.
Algebra multiplication is defined in 
terms of concatenation of such diagrams followed by a certain
straightening procedure which results in a new diagram and
a non-negative integer  $r$ (the number of parts removed during the
straightening procedure). The product is then this new diagram 
times $x^r$. Here is an example:

\begin{center}
\resizebox{9cm}{!}{
$
\xymatrix{
1\ar@{-}[d]&2\ar@{-}[dr]&3&4\ar@{-}[dr]&5\ar@{-}[d]&&&&&&&&&\\
1&{\color{blue}2}&3\ar@/^3pt/@{-}[r]&4&5&&&&&
1\ar@{-}[dd]\ar@/_3pt/@{-}[r]&2\ar@{-}[ddrrr]&3&4\ar@{-}[dd]&5\ar@{-}[ddl]\\
&&\circ&&&&=&&x^{\color{blue}1}&&&&&\\
1\ar@{-}[d]&{\color{blue}2}&3\ar@{-}[dll]&4\ar@{-}[dr]&5\ar@{-}[dl]&&&&&
1&2\ar@/^3pt/@{-}[r]&3&4&5\\
1&2\ar@/^3pt/@{-}[r]&3&4&5&&&&&&&&&\\
}
$
}
\end{center}

The unique removed part in this example is colored in {\color{blue}blue}.
We refer the reader to \cite{HR} for details.

Given a partition diagram $\mathbf{d}$, the parts which 
intersect both copies of $\underline{n}$ are called 
{\em propagating lines} and the number of such parts 
is called {\em rank}.
This rank is an integer between $0$ and $n$. For
$0\leq i\leq n$, all partition diagrams of rank at most $i$
span a two-sided ideal in $\mathcal{P}_n(x)$, which we denote by
$I_i$. Given a simple $\mathcal{P}_n(x)$-module $L$, there
is a minimal $i$ such that $I_i$ does not belong to the
annihilator of $L$. Simple $\mathcal{P}_n(x)$-modules with such 
property are in a natural bijection with partitions of $i$
and are constructed as follows.

Let $\mathbf{d}$ be a partition diagram of rank $i$. Consider the set
$\mathbf{L}_{\mathbf{d}}$ of all partition diagrams of rank $i$
which can be obtained from $\mathbf{d}$ by left concatenation with other diagrams.
Then the linearization $\Bbbk\mathbf{L}_{\mathbf{d}}$ is naturally
a $\mathcal{P}_n(x)$-$\Bbbk[S_i]$-bimodule
which is free as a $\Bbbk[S_i]$-module. For $\lambda\vdash i$, the module
\begin{displaymath}
\mathbf{C}^\lambda:=\Bbbk\mathbf{L}_{\mathbf{d}}
\bigotimes_{\Bbbk[S_i]}\mathbf{S}^\lambda
\end{displaymath}
has simple top which we denote $\mathbf{N}^\lambda$. The set
\begin{displaymath}
\{ \mathbf{N}^\lambda\,:\, \lambda\vdash i, \, 0\leq i\leq n
\}
\end{displaymath}
is a full and irredundant set of representatives of isomorphism classes
of simple $\mathcal{P}_n(x)$-modules.

We also note that $\mathbf{C}^\lambda\cong \mathbf{N}^\lambda$
provided that $x\not\in\{0,1,2,\dots,2n-2\}$ since in this case
the partition algebra is semi-simple, see \cite{MaSa} or \cite[Theorem~3.27]{HR}.
Note that, for $x\in\{0,1,2,\dots,2n-2\}$, the isomorphism 
$\mathbf{C}^\lambda\cong \mathbf{N}^\lambda$ might still hold 
for some $\lambda$. In particular, under the assumption 
$x\not\in\{0,1,2,\dots,2n\}$, the module $\mathbf{N}^\lambda$ has a basis that does not
depend on $x$ and in which the action of each partition diagram is
given by a matrix in which all coefficients are polynomials in $x$.

We will call the modules $\mathbf{C}^\lambda$ {\em standard}.
A {\em standard filtration} of some $\mathcal{P}_n(x)$-module
is a filtration whose subquotients are standard modules.
This terminology reflects the fact that the algebra
$\mathcal{P}_n(x)$ is quasi-hereditary (here our assumption $x\neq 0$ is important).

\subsection{Schur-Weyl dualities for partition algebras}\label{s7.25}

Consider the natural $S_k$-module $\Bbbk^k$ and its $n$-fold
tensor power $(\Bbbk^k)^{\otimes n}$. It is an $S_k$-module
via the diagonal action of $S_k$. The algebra $\mathcal{P}_n(k)$ 
acts naturally on $(\Bbbk^k)^{\otimes n}$, see, for example,
\cite[Formula~(3.2)]{HR}, this action 
commutes with the action of $S_k$ and, moreover, the two 
commuting actions generate each other's centralizers. 
This is known as the {\em Schur-Weyl duality} for partition algebras,
see \cite[Section~3]{HR}. As a $\Bbbk[S_k]$-$\mathcal{P}_n(k)$ bimodule,
the space $(\Bbbk^k)^{\otimes n}$ decomposes into a direct 
sum of simple bimodules indexed by those partitions
$\lambda=(\lambda_1,\lambda_2,\dots)$ of $k$, for which $k-\lambda_1\leq n$. The corresponding 
simple $\Bbbk[S_k]$-$\mathcal{P}_n(k)$ bimodule is then
$\mathbf{S}^\lambda\otimes_\Bbbk\mathbf{N}^{\lambda_{>1}}$,
where $\lambda_{>1}$ is the partition of $k-\lambda_1$
obtained from $\lambda$ by deleting the first part $\lambda_1$.

We note that neither the action of $\Bbbk[S_k]$ nor the action of 
$\mathcal{P}_n(k)$ on $(\Bbbk^k)^{\otimes n}$ is faithful,
in general.

Now let $m=kl$. Then we have the natural $S_k$-module
$\Bbbk^k$ and the natural $S_l$-module
$\Bbbk^l$. Their tensor product 
$\Bbbk^k\otimes_{\Bbbk}\Bbbk^l$
is isomorphic to the natural $S_m$-module
$\Bbbk^m$. For any $n\geq 0$, we also have the obvious
isomorphism
\begin{displaymath}
(\Bbbk^k\otimes_{\Bbbk}\Bbbk^l)^{\otimes n} \cong
(\Bbbk^k)^{\otimes n}\otimes_{\Bbbk}(\Bbbk^l)^{\otimes n}.
\end{displaymath}
By the Schur-Weyl duality, the $S_{kl}$-endomorphisms of the left hand side
are given by  the image of $\mathcal{P}_n(kl)$. Similarly, the $S_{k}\times S_l$-endomorphisms 
of the right hand side are given by the image of 
$\mathcal{P}_n(k)\otimes_{\Bbbk}\mathcal{P}_n(l)$. 
This suggests that our original problem of restriction from
$S_{kl}$ to $S_{k}\times S_l$ should be interpretable, via the Schur-Weyl duality,
in terms of some induction problem for partition algebras. This is what we will investigate
in the next subsections.

\subsection{Generalized comultiplication for partition algebras}\label{s7.3}

We do not know any interesting comultiplication for partition algebras. However,
the following proposition describes a generalized version of  such comultiplication.

\begin{proposition}\label{prop7.3.1}
Let $n\in\mathbb{Z}_{>0}$ and $a,b\in\Bbbk$. Then, mapping a 
partition diagram $\mathbf{d}$ to $\mathbf{d}\otimes\mathbf{d}$,
defines an algebra homomorphism from $\mathcal{P}_n(ab)$
to $\mathcal{P}_n(a)\otimes_{\Bbbk}\mathcal{P}_n(b)$.
\end{proposition}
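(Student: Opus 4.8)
The plan is to verify directly that the diagram-wise map $\Phi\colon \mathbf{d}\mapsto\mathbf{d}\otimes\mathbf{d}$ extends linearly to an algebra homomorphism $\mathcal{P}_n(ab)\to\mathcal{P}_n(a)\otimes_{\Bbbk}\mathcal{P}_n(b)$. Since the partition diagrams form a $\Bbbk$-basis of $\mathcal{P}_n(ab)$, the linear extension is automatic; the content is that $\Phi$ respects multiplication, i.e. $\Phi(\mathbf{d}_1\mathbf{d}_2)=\Phi(\mathbf{d}_1)\Phi(\mathbf{d}_2)$ for all partition diagrams $\mathbf{d}_1,\mathbf{d}_2$, and that $\Phi$ sends the identity diagram to the identity. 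The latter is immediate, since the identity diagram $\mathbf{1}$ of $\mathcal{P}_n$ satisfies $\Phi(\mathbf{1})=\mathbf{1}\otimes\mathbf{1}$, the identity of the tensor product. So everything reduces to compatibility with the concatenation-and-straightening multiplication rule.

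The key observation is that the combinatorial data produced when multiplying two partition diagrams does not depend on the parameter $x$: given $\mathbf{d}_1$ and $\mathbf{d}_2$, stacking $\mathbf{d}_1$ on top of $\mathbf{d}_2$ and taking connected components produces a single well-defined partition diagram $\mathbf{d}_3$ together with a single well-defined non-negative integer $r$ (the number of connected components lying entirely in the middle row, removed during straightening), and one has $\mathbf{d}_1\mathbf{d}_2=x^{r}\mathbf{d}_3$ in $\mathcal{P}_n(x)$ \emph{for every} $x$. I would state this explicitly, citing \cite{HR}. Consequently, in $\mathcal{P}_n(ab)$ we have $\mathbf{d}_1\mathbf{d}_2=(ab)^{r}\mathbf{d}_3$, while in $\mathcal{P}_n(a)\otimes_{\Bbbk}\mathcal{P}_n(b)$ we compute
\begin{displaymath}
\Phi(\mathbf{d}_1)\Phi(\mathbf{d}_2)=(\mathbf{d}_1\otimes\mathbf{d}_1)(\mathbf{d}_2\otimes\mathbf{d}_2)=(\mathbf{d}_1\mathbf{d}_2)\otimes(\mathbf{d}_1\mathbf{d}_2)=(a^{r}\mathbf{d}_3)\otimes(b^{r}\mathbf{d}_3)=(ab)^{r}(\mathbf{d}_3\otimes\mathbf{d}_3)=(ab)^{r}\Phi(\mathbf{d}_3),
\end{displaymath}
using that the same diagram $\mathbf{d}_3$ and the same exponent $r$ govern the product in all three algebras. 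This equals $\Phi((ab)^r\mathbf{d}_3)=\Phi(\mathbf{d}_1\mathbf{d}_2)$, which is exactly what is needed.

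I would then note that $\Phi$ is evidently $\Bbbk$-linear and multiplicative on the basis, hence multiplicative everywhere, and unital, so it is an algebra homomorphism. The only point that requires genuine care — and which I would single out as the main (minor) obstacle — is to make sure the straightening data is genuinely parameter-independent: one must confirm that the diagram $\mathbf{d}_3$ obtained by concatenation-then-connected-components and the count $r$ of discarded middle components are defined purely combinatorially, before any scalar is attached, so that they are literally the same objects in $\mathcal{P}_n(a)$, $\mathcal{P}_n(b)$ and $\mathcal{P}_n(ab)$. This is standard and follows from the definition of partition algebra multiplication recalled in Subsection~\ref{s7.2} (and in \cite{HR}), but it is the crux of why the argument works; everything else is formal manipulation in the tensor product algebra. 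No associativity or well-definedness check beyond this is needed, since those are already part of the definition of $\mathcal{P}_n(x)$.
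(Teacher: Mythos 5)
Your proof is correct and follows essentially the same route as the paper: both argue that the straightening data $(\mathbf{d}_3, r)$ produced when multiplying two partition diagrams is parameter-independent, and then carry out the same one-line computation $(a^{r}\mathbf{d}_3)\otimes(b^{r}\mathbf{d}_3)=(ab)^{r}(\mathbf{d}_3\otimes\mathbf{d}_3)$. Your explicit remark on unitality is a small, harmless addition not spelled out in the paper.
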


\begin{proof}
Let $\mathbf{d}$ and $\mathbf{d}'$ be two partition diagrams.
To compute their product we have to compose them and then 
apply the straightening procedure during which $r$ parts will be
removed and which outputs a new partition diagram $\mathbf{d}''$. 
Neither $r$ nor $\mathbf{d}''$ depend on the parameter of the 
partition algebra. 

Hence, for the algebra $\mathcal{P}_n(ab)$, the product of 
$\mathbf{d}$ and $\mathbf{d}'$ equals $(ab)^r\mathbf{d}''$.
At the same time, for the algebra 
$\mathcal{P}_n(a)\otimes_{\Bbbk}\mathcal{P}_n(b)$,
the product of $\mathbf{d}\otimes\mathbf{d}$ and 
$\mathbf{d}'\otimes\mathbf{d}'$ equals the element
$\big((a^r)\mathbf{d}''\big)\otimes\big((b^r)\mathbf{d}'')
=(ab)^r  \mathbf{d}''\otimes\mathbf{d}''$.
The claim follows.
\end{proof}

\subsection{Partition algebra and $\mathbf{b}^\nu_{\lambda,\mu}$}\label{s7.4}

Proposition~\ref{prop7.3.1} implies that, given a
$\mathcal{P}_n(a)$-module $X$ and a $\mathcal{P}_n(b)$-module $Y$,
the tensor product $X\otimes_{\Bbbk} Y$ is, naturally, a
$\mathcal{P}_n(ab)$-module. 

\begin{proposition}\label{prop7.4.1}
Assume that the $\mathcal{P}_n(a)$-module $X$ 
has a standard filtration and that the 
$\mathcal{P}_n(b)$-module $Y$ has a standard filtration.
Then the $\mathcal{P}_n(ab)$-module $X\otimes_{\Bbbk} Y$
has a standard filtration as well.
\end{proposition}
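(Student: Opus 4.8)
The plan is to reduce the statement to a single filtration step via a standard dévissage argument, and then handle that step by an explicit computation with the cell (standard) modules. First I would observe that tensoring over $\Bbbk$ is exact in each variable, so it suffices to prove the claim when $X=\mathbf{C}^\lambda$ is a single standard $\mathcal{P}_n(a)$-module and $Y=\mathbf{C}^\mu$ is a single standard $\mathcal{P}_n(b)$-module: given standard filtrations of $X$ and $Y$, refine by tensoring the filtration of $X$ with the fixed $Y$ and then the fixed subquotients of $X$ with the filtration of $Y$; exactness guarantees that the associated graded of the resulting filtration of $X\otimes_\Bbbk Y$ is the direct sum of the various $\mathbf{C}^\lambda\otimes_\Bbbk\mathbf{C}^\mu$, so if each of these has a standard filtration, so does $X\otimes_\Bbbk Y$.

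Next I would analyze $\mathbf{C}^\lambda\otimes_\Bbbk\mathbf{C}^\mu$ directly using the bimodule description. Writing $\mathbf{C}^\lambda=\Bbbk\mathbf{L}_{\mathbf{d}}\otimes_{\Bbbk[S_i]}\mathbf{S}^\lambda$ with $\lambda\vdash i$ and $\mathbf{C}^\mu=\Bbbk\mathbf{L}_{\mathbf{e}}\otimes_{\Bbbk[S_j]}\mathbf{S}^\mu$ with $\mu\vdash j$, the tensor product over $\Bbbk$ is governed by the $\mathcal{P}_n(ab)$-$\Bbbk[S_i\times S_j]$-bimodule $\Bbbk\mathbf{L}_{\mathbf{d}}\otimes_\Bbbk\Bbbk\mathbf{L}_{\mathbf{e}}$, where $\mathcal{P}_n(ab)$ acts via the diagonal homomorphism of Proposition~\ref{prop7.3.1} and $\Bbbk[S_i\times S_j]$ acts on the right. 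The key structural point is that this bimodule, as a left $\mathcal{P}_n(ab)$-module, has a filtration by left cell modules $\Bbbk\mathbf{L}_{\mathbf{f}}$ of various ranks: a pair $(\mathbf{u},\mathbf{v})\in\mathbf{L}_{\mathbf{d}}\times\mathbf{L}_{\mathbf{e}}$, under left multiplication by a partition diagram $\mathbf{c}$ acting diagonally, produces $(\mathbf{c}\mathbf{u},\mathbf{c}\mathbf{v})$, and the span of those pairs whose ``merged'' propagating pattern is of rank exactly $r$ forms a subquotient; filtering by decreasing $r$ gives a filtration whose subquotients are direct sums of cell modules $\Bbbk\mathbf{L}_{\mathbf{f}}$, each carrying a commuting right action of a product of symmetric groups that sits inside $S_i\times S_j$. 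This is the partition-algebra analogue of the analysis carried out for $IS_n$ in Subsection~\ref{s4.4} and for $I_n^*$ in Subsection~\ref{s5.3}, and I would phrase it combinatorially in those same terms.

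Given such a filtration of the bimodule $\Bbbk\mathbf{L}_{\mathbf{d}}\otimes_\Bbbk\Bbbk\mathbf{L}_{\mathbf{e}}$ by $(\mathcal{P}_n(ab),\Bbbk[S_i\times S_j])$-sub-bimodules with subquotients of the form $\Bbbk\mathbf{L}_{\mathbf{f}}\otimes_\Bbbk B$ for suitable right $\Bbbk[S_i\times S_j]$-modules $B$ on which the right action factors appropriately, applying the exact functor $-\otimes_{\Bbbk[S_i\times S_j]}(\mathbf{S}^\lambda\otimes_\Bbbk\mathbf{S}^\mu)$ yields a filtration of $\mathbf{C}^\lambda\otimes_\Bbbk\mathbf{C}^\mu$ whose subquotients are of the form $\Bbbk\mathbf{L}_{\mathbf{f}}\otimes_{\Bbbk[S_r]}W$ for various right $\Bbbk[S_r]$-modules $W$ (here $r=\mathrm{rank}(\mathbf{f})$). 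Each such module, being of the form $\Bbbk\mathbf{L}_{\mathbf{f}}\otimes_{\Bbbk[S_r]}W$ with $\Bbbk\mathbf{L}_{\mathbf{f}}$ free of finite rank as a right $\Bbbk[S_r]$-module, is isomorphic to a direct sum of standard modules $\mathbf{C}^\theta$ (with multiplicities the multiplicities of $\mathbf{S}^\theta$ in $W$), since $\Bbbk\mathbf{L}_{\mathbf{f}}\otimes_{\Bbbk[S_r]}-$ is additive and sends $\mathbf{S}^\theta$ to $\mathbf{C}^\theta$. Refining the filtration so that each subquotient is a single $\mathbf{C}^\theta$ completes the argument.

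The main obstacle I expect is establishing rigorously that the bimodule $\Bbbk\mathbf{L}_{\mathbf{d}}\otimes_\Bbbk\Bbbk\mathbf{L}_{\mathbf{e}}$ admits the asserted filtration by sub-bimodules whose subquotients are (sums of) cell modules with the correct commuting right symmetric-group actions — i.e.\ the combinatorial bookkeeping of how the propagating-line structures of two partition diagrams interact under diagonal left multiplication, and the verification that the rank-filtration is genuinely by \emph{sub}-bimodules and not merely by subspaces. Unlike the $IS_n$ case, where the relevant bimodule turned out to be free over the acting symmetric group, here the filtration will in general be nontrivial and the subquotient modules $W$ will not all be free, which is precisely why one only gets a standard filtration rather than the stronger conclusion (and the subsequent remarks in the paper note that no cell filtration need exist). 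I would therefore isolate the filtration statement for $\Bbbk\mathbf{L}_{\mathbf{d}}\otimes_\Bbbk\Bbbk\mathbf{L}_{\mathbf{e}}$ as the crux and prove it by an explicit rank-decreasing argument on pairs of diagrams, mirroring the arguments of Sections~\ref{s4} and \ref{s5}.
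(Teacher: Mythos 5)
Your first reduction (to a single tensor product of standard or cell modules via exactness of $\otimes_\Bbbk$) matches the paper and is fine. The gap is in the crux step you flag at the end, and it is not merely a bookkeeping difficulty: the filtration you propose does not exist. You assert that the bimodule $\Bbbk\mathbf{L}_{\mathbf{d}}\otimes_\Bbbk\Bbbk\mathbf{L}_{\mathbf{e}}$ has a filtration by (merged) propagating rank whose subquotients are, as left $\mathcal{P}_n(ab)$-modules, direct sums of cell modules $\Bbbk\mathbf{L}_{\mathbf{f}}$ --- equivalently, that the tensor product of two cell modules has a cell filtration. Subsection~\ref{s7.6} of the paper gives an explicit counterexample already for $n=2$: $\Bbbk[\mathcal{L}_1^{(1)}]\otimes_\Bbbk\Bbbk[\mathcal{L}_0^{(1)}]$ (a $6$-dimensional module, with trivial right group actions so there is no difference between the bimodule and the left-module picture) has no cell filtration; its standard filtration has layers $\Bbbk[\mathcal{L}_1^{(1)}]$, $\Bbbk[\mathcal{L}_2]$, and a single $1$-dimensional rank-two standard module, and the layers are not organised by a monotone merged-rank statistic. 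Your closing caveat --- that the $W$'s will ``not all be free'' --- misidentifies the obstruction: the problem is not that $W$ is non-free, but that the rank-$r$ stratum is not a direct sum of cell modules at all, so the structure result you need for the bimodule cannot be obtained by imitating Subsections~\ref{s4.4} and~\ref{s5.3}. In those semigroup settings the relevant group action on the top stratum is free, which is precisely what fails here.

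The paper's proof therefore takes a genuinely different route, and the key idea is a deformation/genericity argument that your proposal does not contain. After reducing to cell modules, the paper builds an $x$-independent directed graph $\Gamma$ on the basis pairs $\mathbf{d}_1\otimes\mathbf{d}_2$, chooses a filtration of $\Gamma$ into full subgraphs closed under outgoing edges whose successive layers are \emph{strongly connected} (not organised by rank), and shows each layer $N_i$ receives a surjection from $\Bbbk[\mathbf{L}_{\tilde{\mathbf{d}}}]\otimes_{\Bbbk[G]}\Bbbk[G/H]$, a module that has a standard filtration by construction. The crucial final step is that this surjection is an isomorphism: for generic $x$ the algebra is semisimple, so the surjection must be an isomorphism, and since both its source and target (and the fact of surjectivity) are independent of $x$, a dimension count forces it to be an isomorphism for all $x\neq 0$. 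Without some mechanism of this kind --- exploiting that everything combinatorial in sight is independent of the parameter --- one cannot conclude; this is what replaces the ``free action'' argument that was available for $IS_n$ and $I_n^*$ and is the idea your proposal is missing.
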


\begin{proof}
By additivity, it is enough to prove the claim under the assumption
that both $X$ and $Y$ are cell modules.

Let $\mathbf{L}_1$ and $\mathbf{L}_2$ be two left cells
such that $X=\Bbbk[\mathbf{L}_1]$ and $X=\Bbbk[\mathbf{L}_2]$.
Then $X$ has the canonical basis consisting of all 
partition diagrams in $\mathbf{L}_1$ and 
$Y$ has the canonical basis consisting of all 
partition diagrams in $\mathbf{L}_2$. Therefore
$X\otimes_{\Bbbk} Y$ has the canonical basis consisting of
all elements of the form $\mathbf{d}_1\otimes\mathbf{d}_2$,
where $\mathbf{d}_i\in \mathbf{L}_i$, for $i=1,2$.

Consider the oriented graph $\Gamma$ whose vertices are 
all these elements in the basis of $X\otimes_{\Bbbk} Y$.
For two vertices $\mathbf{d}_1\otimes\mathbf{d}_2$
and $\mathbf{d}'_1\otimes\mathbf{d}'_2$, put an oriented edge
from $\mathbf{d}_1\otimes\mathbf{d}_2$ to 
$\mathbf{d}'_1\otimes\mathbf{d}'_2$ provided that there is 
a partition diagram $\tilde{\mathbf{d}}$ which, when applied
to $\mathbf{d}_1\otimes\mathbf{d}_2$, outputs 
$\mathbf{d}'_1\otimes\mathbf{d}'_2$, up to a non-zero scalar.
Note that $\Gamma$ does not depend on $x\in\{a,b\}$, since we assume $x\neq 0$.
Also note that all vertices have loops as we can choose
$\tilde{\mathbf{d}}$ to be the identity partition diagram.

Since $\Gamma$ is finite, we can choose a filtration of 
$\Gamma$ of the form
\begin{displaymath}
\varnothing =\Gamma_0\subset \Gamma_1\subset
\dots\subset \Gamma_k=\Gamma
\end{displaymath}
by non-empty full subgraphs such that the following two conditions
are satisfied, for each $i$:
\begin{itemize}
\item If $\alpha$ is an edge from $u$ to $v$ and $u\in\Gamma_i$,
then $v\in \Gamma_i$.
\item The full subgraph of $\Gamma_i$ whose vertices are all vertices
outside $\Gamma_{i-1}$ is strongly connected.
\end{itemize}
Then the linearization $\Bbbk[\Gamma_i]$ of the vertex set of each
$\Gamma_i$ is a submodule of $X\otimes_{\Bbbk} Y$, so we have the
induced filtration of $X\otimes_{\Bbbk} Y$ by submodules
\begin{displaymath}
0\subset  \Bbbk[\Gamma_1]\subset  \Bbbk[\Gamma_2]\subset\dots
\subset X\otimes_{\Bbbk} Y
\end{displaymath}
and this filtration, by construction, does not depend on $x$.

We claim that each $\mathcal{P}_n(ab)$-module
$N_i:=\Bbbk[\Gamma_i]/\Bbbk[\Gamma_{i-1}]$ has 
a filtration by standard modules. Let $j$ be the minimal
rank for which there is a partition diagram that does
not annihilate $\Bbbk[\Gamma_i]/\Bbbk[\Gamma_{i-1}]$.
Choose some idempotent partition diagram 
$\tilde{\mathbf{d}}$ of rank $j$ and fix some
$\mathbf{d}_1\otimes\mathbf{d}_2\in \Gamma_i\setminus 
\Gamma_{i-1}$ which is fixed by $\tilde{\mathbf{d}}$
(this exists as $\tilde{\mathbf{d}}$ is an idempotent
and it does not annihilate $N_i$). Applying the left
cell of $\tilde{\mathbf{d}}$ to 
$\mathbf{d}_1\otimes\mathbf{d}_2$ gives a set of vertices
which is invariant under the action of all partition
diagrams (modulo $\Gamma_{i-1}$). Hence this must 
coincide with the whole of $\Gamma_i\setminus 
\Gamma_{i-1}$ due to strong connectivity of the latter.
In reality, we just proved that $N_i$ is a quotient
of $\Bbbk[\mathbf{L}_{\tilde{\mathbf{d}}}]$ since we have
a natural surjective homomorphism from the letter to the
former which sends $\tilde{\mathbf{d}}'\in \mathbf{L}_{\tilde{\mathbf{d}}}$
to $\tilde{\mathbf{d}}'\cdot (\mathbf{d}_1\otimes\mathbf{d}_2)$.

Let $G$ be the $\mathcal{H}$-class
of $\tilde{\mathbf{d}}$. Then $\mathbf{L}_{\tilde{\mathbf{d}}}$ is a
free right $G$-act. Let $H$ be the stabilizer of
$\mathbf{d}_1\otimes\mathbf{d}_2$ in $G$. Then the above gives
a surjection from the module
\begin{equation}\label{eq-abn-5}
\Bbbk[\mathbf{L}_{\tilde{\mathbf{d}}}]\otimes_{\Bbbk[G]}\Bbbk[G/H]
\end{equation}
to $N_i$. Note that the module \eqref{eq-abn-5} has a filtration
by standard modules by construction.

Everything above is independent of $x$, as long as $x\neq 0$. For 
generic $x$, standard modules are simple and hence the surjection
in the previous paragraph is, in fact an isomorphism between
the module \eqref{eq-abn-5} and $N_i$. But then the fact that this 
surjection is an isomorphism does not depend on $x$, completing the proof.
\end{proof}

\begin{remark}\label{rem7.4.1-5}
{\em
Analogues of Proposition~\ref{prop7.4.1} are true, with the 
same proof, for other classical diagram subalgebras of the
partition algebra, in particular, for the Brauer algebra,
the partial Brauer algebra, the Temperley-Lieb algebra etc.
}
\end{remark}

\begin{remark}\label{rem7.4.1-7}
{\em
For partition algebras, natural analogues of 
Littlewood--Richardson coefficient, that is coefficients which
describe multiplicities of standard modules in the induced
external tensor product of standard modules, are given by 
reduced Kronecker coefficient. This can be found in 
\cite[Theorem 4.3]{BDO} and in  \cite[Theorem~3.11]{BV}.
}
\end{remark}

From the theory of quasi-hereditary algebras, we know that the
multiplicity of a standard subquotient in a standard filtration 
of some module that admits a standard filtration
does not depend on the choice of such a  filtration.

Let us now go back to the setup as described in Subsection~\ref{s7.1}.

\begin{corollary}\label{cor7.4.1}
Let $\lambda\vdash k$, $\mu\vdash l$ and $\nu\vdash kl$.
Then, for 
\begin{displaymath}
n\geq \max(kl-\nu_1,k-\lambda_1,l-\mu_1), 
\end{displaymath}
the number 
$\mathbf{b}^\nu_{\lambda,\mu}$ coincides with the multiplicity 
of the $\mathcal{P}_n(kl)$-module $\mathbf{N}^{\nu_{>1}}$ in 
the tensor product of the $\mathcal{P}_n(k)$-module
$\mathbf{N}^{\lambda_{>1}}$ and the $\mathcal{P}_n(l)$-module
$\mathbf{N}^{\mu_{>1}}$.
\end{corollary}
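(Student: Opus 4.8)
The plan is to transport the restriction problem through the Schur--Weyl duality set up in Subsection~\ref{s7.25} and then match it with the generalized comultiplication from Proposition~\ref{prop7.3.1}. First I would recall that, as a $\Bbbk[S_k]$-$\mathcal{P}_n(k)$-bimodule, the space $(\Bbbk^k)^{\otimes n}$ decomposes as $\bigoplus \mathbf{S}^\lambda\otimes_\Bbbk\mathbf{N}^{\lambda_{>1}}$, the sum running over partitions $\lambda$ of $k$ with $k-\lambda_1\leq n$; similarly for $l$ and for $kl$. The hypothesis $n\geq\max(kl-\nu_1,k-\lambda_1,l-\mu_1)$ guarantees that all three of $\mathbf{N}^{\lambda_{>1}}$, $\mathbf{N}^{\mu_{>1}}$, $\mathbf{N}^{\nu_{>1}}$ actually occur in the relevant tensor powers, so none of the multiplicities we want to compare degenerates to zero for a trivial reason.

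Next I would identify the two bimodule structures on $(\Bbbk^k)^{\otimes n}\otimes_\Bbbk(\Bbbk^l)^{\otimes n}\cong(\Bbbk^{kl})^{\otimes n}$. On the left we have $S_k\times S_l$ acting diagonally on the two tensor factors, which is exactly the restriction of the $S_{kl}$-action along the embedding $S_k\times S_l\hookrightarrow S_{kl}$ fixed in Subsection~\ref{s7.1}. On the right we have $\mathcal{P}_n(k)\otimes_\Bbbk\mathcal{P}_n(l)$ acting factorwise; by Proposition~\ref{prop7.3.1} with $a=k$, $b=l$, the composite with the diagonal $\mathcal{P}_n(kl)\to\mathcal{P}_n(k)\otimes_\Bbbk\mathcal{P}_n(l)$, $\mathbf{d}\mapsto\mathbf{d}\otimes\mathbf{d}$, is precisely the $\mathcal{P}_n(kl)$-action coming from the Schur--Weyl duality for $S_{kl}$, because the latter action of a partition diagram on $(\Bbbk^{kl})^{\otimes n}=(\Bbbk^k)^{\otimes n}\otimes_\Bbbk(\Bbbk^l)^{\otimes n}$ is visibly the tensor square of its actions on the two factors (the formula $\cite[\text{Formula }(3.2)]{HR}$ is ``diagonal'' in exactly this sense). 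Thus $(\Bbbk^{kl})^{\otimes n}$ is a $\Bbbk[S_k\times S_l]$-$\mathcal{P}_n(kl)$-bimodule in two compatible ways, and the two commutant statements (for $S_{kl}$ versus $\mathcal{P}_n(kl)$, and for $S_k\times S_l$ versus $\mathcal{P}_n(k)\otimes_\Bbbk\mathcal{P}_n(l)$) let me read off multiplicities on either side.

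Now I would compute the $\mathbf{S}^\lambda\otimes_\Bbbk\mathbf{S}^\mu$-isotypic component of $(\Bbbk^{kl})^{\otimes n}$, viewed as an $S_k\times S_l$-module, in two ways. Using the Schur--Weyl decompositions for $S_k$ and $S_l$ separately, that component is $\mathbf{S}^\lambda\otimes_\Bbbk\mathbf{S}^\mu\otimes_\Bbbk(\mathbf{N}^{\lambda_{>1}}\otimes_\Bbbk\mathbf{N}^{\mu_{>1}})$ as a module over $\mathcal{P}_n(k)\otimes_\Bbbk\mathcal{P}_n(l)$, hence over $\mathcal{P}_n(kl)$ via the diagonal it is $\mathbf{S}^\lambda\otimes_\Bbbk\mathbf{S}^\mu$ tensored with the $\mathcal{P}_n(kl)$-module $\mathbf{N}^{\lambda_{>1}}\otimes_\Bbbk\mathbf{N}^{\mu_{>1}}$ (which is well-defined precisely by the paragraph preceding Proposition~\ref{prop7.4.1}). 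On the other hand, using the $S_{kl}$-decomposition $(\Bbbk^{kl})^{\otimes n}\cong\bigoplus_\nu\mathbf{S}^\nu\otimes_\Bbbk\mathbf{N}^{\nu_{>1}}$ and then restricting each $\mathbf{S}^\nu$ to $S_k\times S_l$, the $\mathbf{S}^\lambda\otimes_\Bbbk\mathbf{S}^\mu$-isotypic part picks up a copy of $\mathbf{N}^{\nu_{>1}}$ with multiplicity $\mathbf{b}^\nu_{\lambda,\mu}$ for each $\nu$. Comparing the two descriptions of the same $\mathcal{P}_n(kl)$-module (the multiplicity space of $\mathbf{S}^\lambda\otimes_\Bbbk\mathbf{S}^\mu$), and using that $\{\mathbf{N}^{\nu_{>1}}\}$ are pairwise non-isomorphic simple $\mathcal{P}_n(kl)$-modules, yields that $\mathbf{b}^\nu_{\lambda,\mu}$ equals the multiplicity of $\mathbf{N}^{\nu_{>1}}$ in $\mathbf{N}^{\lambda_{>1}}\otimes_\Bbbk\mathbf{N}^{\mu_{>1}}$, which is the claim.

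The main obstacle I anticipate is the bookkeeping in the second paragraph: one must check carefully that the diagonal comultiplication $\mathbf{d}\mapsto\mathbf{d}\otimes\mathbf{d}$ of Proposition~\ref{prop7.3.1} is genuinely intertwined with the Schur--Weyl action of $\mathcal{P}_n(kl)$ on $(\Bbbk^{k}\otimes_\Bbbk\Bbbk^l)^{\otimes n}$ under the canonical identification with $(\Bbbk^k)^{\otimes n}\otimes_\Bbbk(\Bbbk^l)^{\otimes n}$ — i.e. that ``tensoring the two natural modules'' matches ``tensoring the two partition-algebra actions via the diagonal'' — since this is exactly the point on which everything hinges. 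A secondary subtlety is keeping the truncation bounds consistent: one needs $n$ large enough that all three simple modules appear, and that the isotypic components being compared are nonzero, which is what the displayed inequality on $n$ provides. Since we are not in the semisimple range for $\mathcal{P}_n$, I would phrase the comparison purely in terms of composition multiplicities of simple modules (equivalently, since each $\mathbf{N}^{\nu_{>1}}$ is the simple top of $\mathbf{C}^{\nu_{>1}}$ and these are the simples, the comparison is clean), rather than invoking any decomposition into indecomposables.
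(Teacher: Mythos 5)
Your proof is correct and follows essentially the same route as the paper: transport the restriction problem through the Schur--Weyl bimodule decompositions from Subsection~\ref{s7.25}, using the diagonal algebra map of Proposition~\ref{prop7.3.1} to mediate between the $\mathcal{P}_n(kl)$-action and the $\mathcal{P}_n(k)\otimes_\Bbbk\mathcal{P}_n(l)$-action on $(\Bbbk^{kl})^{\otimes n}\cong(\Bbbk^k)^{\otimes n}\otimes_\Bbbk(\Bbbk^l)^{\otimes n}$. The paper states this more tersely (``by adjunction, restriction on the left corresponds to induction on the right''), whereas you spell out the isotypic-component comparison in detail and, usefully, explicitly flag the one point the paper leaves implicit: that the diagonal comultiplication $\mathbf{d}\mapsto\mathbf{d}\otimes\mathbf{d}$ intertwines the Schur--Weyl actions — which is indeed immediate from the formula for the partition-algebra action on tensor space. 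One small remark: your closing caution about phrasing things in terms of composition multiplicities, while harmless, is actually unnecessary here, since the $\mathcal{P}_n(kl)$-action on the multiplicity space factors through the (semisimple) endomorphism algebra $\mathrm{End}_{S_{kl}}\bigl((\Bbbk^{kl})^{\otimes n}\bigr)$, so the module $\mathbf{N}^{\lambda_{>1}}\otimes_\Bbbk\mathbf{N}^{\mu_{>1}}$ is automatically semisimple for these integer parameters.
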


\begin{proof}
By adjunction, the restriction from $S_{kl}$ to $S_k\times S_l$
on the left hand side of 
$(\Bbbk^k\otimes_{\Bbbk}\Bbbk^l)^{\otimes n}$
corresponds to induction from the tensor product of
$\mathcal{P}_n(k)$ and $\mathcal{P}_n(l)$ to
$\mathcal{P}_n(kl)$ on the right hand side.
Due to our assumption  $n\geq \max(kl-\nu_1,k-\lambda_1,l-\mu_1)$,
we can apply the bimodule decomposition in the Schur-Weyl duality.
As the connection from the left to the right hand side of 
the Schur-Weyl duality maps a partition $\gamma$ to $\gamma_{>1}$,
the claim follows. 
\end{proof}

\subsection{Stability phenomenon for $\mathbf{b}^\gamma_{\mu,\nu}$}\label{s7.5}

For a partition $\lambda\vdash n$ and a positive integer $a$, we
denote by $\lambda^{(a)}$ the partition of $n+a$ obtained from
$\lambda$ by increasing $\lambda_1$ to $\lambda_1+a$.
The following claim was also proved in \cite[Theorem~4.2]{Ryba}
by completely different arguments.

\begin{theorem}\label{s7.5.1}
Let $\lambda\vdash k$, $\mu\vdash l$ and $\nu\vdash kl$.
Then, for all $a\gg 0$, the value $\mathbf{b}^{\nu^{(la)}}_{\lambda^{(a)},\mu}$ is constant.
\end{theorem}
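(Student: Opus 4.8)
The plan is to reduce the statement, via the Schur--Weyl interpretation of Corollary~\ref{cor7.4.1}, to a question about the \emph{parameter}-dependence of tensor product multiplicities for partition algebras, and then to exploit the fact that, for parameters outside a small finite set, such multiplicities are computed by a linear system whose coefficients are polynomials in the parameter.

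First I would record the bookkeeping that makes everything work. Replacing $\lambda$ by $\lambda^{(a)}$, $\nu$ by $\nu^{(la)}$, and keeping $\mu$ fixed only alters the \emph{first} parts, so $(\lambda^{(a)})_{>1}=\lambda_{>1}$, $(\nu^{(la)})_{>1}=\nu_{>1}$, while $\mu_{>1}$ is unchanged. Moreover the bound in Corollary~\ref{cor7.4.1} is $a$-independent: since $(\lambda^{(a)})_1=\lambda_1+a$, $(\nu^{(la)})_1=\nu_1+la$ and $(k+a)l=kl+la$, we get $(k+a)l-(\nu^{(la)})_1=kl-\nu_1$ and $(k+a)-(\lambda^{(a)})_1=k-\lambda_1$, so $\max\bigl((k+a)l-(\nu^{(la)})_1,\,(k+a)-(\lambda^{(a)})_1,\,l-\mu_1\bigr)=\max(kl-\nu_1,k-\lambda_1,l-\mu_1)=:N_0$. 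Fixing any $n\ge N_0$, Corollary~\ref{cor7.4.1} yields
\begin{displaymath}
\mathbf{b}^{\nu^{(la)}}_{\lambda^{(a)},\mu}=\bigl[\,\mathbf{N}^{\lambda_{>1}}\otimes_\Bbbk\mathbf{N}^{\mu_{>1}}:\mathbf{N}^{\nu_{>1}}\,\bigr],
\end{displaymath}
where $\mathbf{N}^{\lambda_{>1}}$ is a $\mathcal{P}_n(k+a)$-module, $\mathbf{N}^{\mu_{>1}}$ is a fixed $\mathcal{P}_n(l)$-module, and the tensor product is a $\mathcal{P}_n((k+a)l)$-module via the generalized comultiplication of Proposition~\ref{prop7.3.1}. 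Thus the only $a$-dependence remaining sits in the two parameters $k+a$ and $(k+a)l$, both of which are affine functions of $a$.

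Next I would bring in semisimplicity and polynomiality. For $a\gg 0$ we have $k+a>2n$ and $(k+a)l>2n$, so $\mathcal{P}_n(k+a)$ and $\mathcal{P}_n((k+a)l)$ are semisimple, $\mathbf{N}^{\lambda_{>1}}=\mathbf{C}^{\lambda_{>1}}$ and $\mathbf{N}^{\nu_{>1}}=\mathbf{C}^{\nu_{>1}}$ are simple, and, as recalled in Subsection~\ref{s7.2}, each of them has a basis \emph{independent of the parameter} in which every partition diagram acts by a matrix with entries polynomial in that parameter. Since the diagram basis of $\mathcal{P}_n$ does not depend on the parameter and the comultiplication $\mathbf{d}\mapsto\mathbf{d}\otimes\mathbf{d}$ is parameter-free, in these fixed bases the action of each partition diagram on $\mathbf{C}^{\lambda_{>1}}\otimes_\Bbbk\mathbf{N}^{\mu_{>1}}$ and on $\mathbf{C}^{\nu_{>1}}$ is given by matrices with entries in $\Bbbk[a]$. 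As $\mathcal{P}_n((k+a)l)$ is semisimple, the module $\mathbf{C}^{\lambda_{>1}}\otimes_\Bbbk\mathbf{N}^{\mu_{>1}}$ is semisimple and the multiplicity above equals $\dim_\Bbbk\mathrm{Hom}_{\mathcal{P}_n((k+a)l)}\bigl(\mathbf{C}^{\nu_{>1}},\,\mathbf{C}^{\lambda_{>1}}\otimes_\Bbbk\mathbf{N}^{\mu_{>1}}\bigr)$. This Hom-space is the solution set of the linear system (in the entries of an unknown matrix) expressing that the matrix intertwines the two actions of each of the finitely many partition diagrams; its coefficient matrix $A(a)$ has entries in $\Bbbk[a]$. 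The rank of $A(a)$ equals its generic value for all $a$ outside the common zero locus of its maximal non-vanishing minors, a finite set; hence $\dim\ker A(a)$, and therefore $\mathbf{b}^{\nu^{(la)}}_{\lambda^{(a)},\mu}$, is constant for $a\gg 0$.

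I expect the main obstacle to be the careful setup ensuring that everything is polynomial in $a$ \emph{in mutually compatible bases}: one must use the single parameter-independent diagram basis of $\mathcal{P}_n$ on both sides, invoke the parameter-independent bases of $\mathbf{C}^{\lambda_{>1}}$ and $\mathbf{C}^{\nu_{>1}}$ from Subsection~\ref{s7.2}, and observe that $\mathbf{d}\mapsto\mathbf{d}\otimes\mathbf{d}$ reintroduces no parameter dependence, so that the intertwining conditions genuinely form a linear system over $\Bbbk[a]$; one should also confirm that we are in the regime where composition multiplicities equal $\dim\mathrm{Hom}$ (i.e.\ $\mathcal{P}_n((k+a)l)$ semisimple for $a\gg0$). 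Once this is arranged, the eventual constancy of the kernel dimension of a one-variable polynomial matrix is elementary, and the asserted stability follows.
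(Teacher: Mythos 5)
Your proposal is correct, and it follows the same overall strategy as the paper: use Corollary~\ref{cor7.4.1} to translate $\mathbf{b}^{\nu^{(la)}}_{\lambda^{(a)},\mu}$ into a tensor-product multiplicity for partition algebras at parameters $k+a$ and $(k+a)l$, note that the relevant module indices $\lambda_{>1}$, $\mu_{>1}$, $\nu_{>1}$ and the bound on $n$ are all $a$-independent, invoke semisimplicity for $a\gg 0$, and then observe that everything in sight depends polynomially/rationally on $a$, so that the multiplicity stabilizes. Where you differ is in the final linear-algebra step. The paper computes the multiplicity as the rank of a primitive idempotent acting on the tensor product, citing the explicit construction in Martin--Woodcock~\cite{MW} of primitive idempotents with coefficients rational in the parameter, and then argues that a matrix with rational-function entries has its maximal rank outside a finite set. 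You instead compute the multiplicity as $\dim\mathrm{Hom}_{\mathcal{P}_n((k+a)l)}\bigl(\mathbf{C}^{\nu_{>1}},\,\mathbf{C}^{\lambda_{>1}}\otimes_\Bbbk\mathbf{N}^{\mu_{>1}}\bigr)$ --- valid since the target algebra is semisimple --- realize the Hom-space as the kernel of the intertwining system $A(a)$ with entries in $\Bbbk[a]$ (using only the parameter-independent bases and polynomial action matrices recalled in Subsection~\ref{s7.2}, plus the parameter-free comultiplication of Proposition~\ref{prop7.3.1}), and then argue that $\dim\ker A(a)$ is constant for $a$ outside the finite vanishing set of a non-zero maximal minor. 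This buys you a slightly more self-contained argument: you avoid the appeal to~\cite{MW} entirely, replacing it with the elementary observation that kernel dimensions of one-parameter polynomial matrix families are generically constant. The paper's route is more concrete and pins the multiplicity to an explicit idempotent, but your version is logically leaner.
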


\begin{proof}
Let $n\geq \max(kl-\nu_1,k-\lambda_1,l-\mu_1)$ and note that
\begin{displaymath}
\max(kl-\nu_1,k-\lambda_1,l-\mu_1)=
\max((kl+la)-\nu^{(la)}_1,(k+a)-\lambda^{(a)}_1,l-\nu_1),
\end{displaymath}
for all $a>0$. This allows us to 
use the interpretation of $\mathbf{b}^{\nu^{(la)}}_{\lambda^{(a)},\mu}$
given by Corollary~\ref{cor7.4.1}. We have
$\nu^{(la)}_{>1}=\nu_{>1}$ and $\lambda^{(a)}_{>1}=\lambda_{>1}$, for all $a>0$.
In particular, on the partition algebra side, all involved modules have
exactly the same corresponding indices, regardless of $a>0$.

Next we note that, for all $a\gg 0$, both algebras 
$\mathcal{P}_n((k+a)l)$ and $\mathcal{P}_n(k+a)$ are
semi-simple, moreover, each simple module $\mathbf{N}^\gamma$ for
each of these algebras coincides with the corresponding 
$\mathbf{C}^\gamma$. 

The multiplicity of a simple module over a semi-simple finite dimensional
algebra can be computed as the rank of a certain primitive idempotent.
Generic primitive idempotents for partition algebras were constructed in
\cite{MW}. Outside of a finite number of values of the parameter,
each such idempotent is given as a linear combination of diagrams
whose coefficients are rational functions in the parameter of 
the partition algebra. We can take $a\gg 0$ such that the 
construction of these primitive idempotents applies (meaning that the denominators 
are non-zero).
If we fix such a primitive idempotent for $\mathcal{P}_n((k+a)l)$, 
then the action of this idempotent on the tensor product of some 
$\mathcal{P}_n(k+a)$-module $\mathbf{C}^\gamma$  (in which we fix the
standard diagram basis) with a 
fixed $\mathcal{P}_n(l)$-module is given by a matrix whose coefficients
are rational functions in $a$. Hence this matrix has maximal rank for all
but finitely many values of $a$. The claim follows.
\end{proof}

\subsection{Tensor product of cell modules for partition algebras}\label{s7.6}

Unlike the cases of the (dual) symmetric inverse semigroups, tensor product 
of cell modules for partition algebras does not have to have a filtration 
whose subquotients are cell modules. In this subsection we give an
explicit example illustrating how this property fails already for $n=2$.

The algebra $\mathcal{P}_2(x)$ has dimension $15$ (the bell number $\mathbf{B}_4$).
As usual, we assume $x\neq 0$.
The possible ranks for partition diagrams are $0,1$ and $2$.
We have one left cell $\mathcal{L}_2$ of rank two, consisting of the
following two diagrams:
\begin{displaymath}
\resizebox{12mm}{!}{\xymatrix{1\ar@{-}[d]&&2\ar@{-}[d]\\1&&2}}\qquad\qquad
\resizebox{12mm}{!}{\xymatrix{1\ar@{-}[drr]&&2\ar@{-}[dll]\\1&&2}}
\end{displaymath}
We have three left cell $\mathcal{L}_1^{(i)}$, where $i=1,2,3$, 
of rank one, each containing three elements, given by the following rows:
\begin{displaymath}
\resizebox{12mm}{!}{\xymatrix{1\ar@{-}[d]&&2\\1&&2}}\qquad\qquad
\resizebox{12mm}{!}{\xymatrix{1\ar@{-}[drr]&&2\\1&&2}}\qquad\qquad
\resizebox{12mm}{!}{\xymatrix{1\ar@{-}[d]&&2\\1\ar@/^1pc/@{-}[rr]&&2}}
\end{displaymath}

\begin{displaymath}
\resizebox{12mm}{!}{\xymatrix{1&&2\ar@{-}[dll]\\1&&2}}\qquad\qquad
\resizebox{12mm}{!}{\xymatrix{1&&2\ar@{-}[d]\\1&&2}}\qquad\qquad
\resizebox{12mm}{!}{\xymatrix{1&&2\ar@{-}[d]\\1\ar@/^1pc/@{-}[rr]&&2}}
\end{displaymath}

\begin{displaymath}
\resizebox{12mm}{!}{\xymatrix{1\ar@{-}[d]&&2\ar@/^1pc/@{-}[ll]\\1&&2}}\qquad\qquad
\resizebox{12mm}{!}{\xymatrix{1\ar@{-}[drr]&&2\ar@/^1pc/@{-}[ll]\\1&&2}}\qquad\qquad
\resizebox{12mm}{!}{\xymatrix{1\ar@{-}[d]&&2\ar@/^1pc/@{-}[ll]\\1\ar@/^1pc/@{-}[rr]&&2}}
\end{displaymath}

Finally, we have two left cell $\mathcal{L}_0^{(i)}$, where $i=1,2$, 
of rank zero, each containing two elements, given by the following rows:
\begin{displaymath}
\resizebox{12mm}{!}{\xymatrix{1&&2\\1&&2}}\qquad\qquad
\resizebox{12mm}{!}{\xymatrix{1&&2\\1\ar@/^2pc/@{-}[rr]&&2}}
\end{displaymath}

\begin{displaymath}
\resizebox{12mm}{!}{\xymatrix{1&&2\ar@/^1pc/@{-}[ll]\\1&&2}}\qquad\qquad
\resizebox{12mm}{!}{\xymatrix{1&&2\ar@/^1pc/@{-}[ll]\\1\ar@/^1pc/@{-}[rr]&&2}}
\end{displaymath}

In particular, we have, up to isomorphism, three cell modules
$\Bbbk[\mathcal{L}_2]$, $\Bbbk[\mathcal{L}_1^{(1)}]$ and
$\Bbbk[\mathcal{L}_0^{(1)}]$, or respective dimensions $2$, $3$ and $2$.

Now we claim that $\Bbbk[\mathcal{L}_1^{(1)}]\otimes_\Bbbk 
\Bbbk[\mathcal{L}_0^{(1)}]$ does not have a cell filtration.
Indeed, this module has dimension six with the following basis:
\begin{displaymath}
v_1:= 
\resizebox{7mm}{!}{\xymatrix{1\ar@{-}[d]&&2\\1&&2}}\otimes
\resizebox{7mm}{!}{\xymatrix{1&&2\\1&&2}}
\qquad\qquad
v_2:= 
\resizebox{7mm}{!}{\xymatrix{1\ar@{-}[drr]&&2\\1&&2}}\otimes
\resizebox{7mm}{!}{\xymatrix{1&&2\\1&&2}}
\qquad\qquad
v_3:= 
\resizebox{7mm}{!}{\xymatrix{1\ar@{-}[d]&&2\\1\ar@/^1pc/@{-}[rr]&&2}}\otimes
\resizebox{7mm}{!}{\xymatrix{1&&2\\1&&2}}
\end{displaymath}
\begin{displaymath}
w_1:= 
\resizebox{7mm}{!}{\xymatrix{1\ar@{-}[d]&&2\\1&&2}}\otimes
\resizebox{7mm}{!}{\xymatrix{1&&2\\1\ar@/^2pc/@{-}[rr]&&2}}
\qquad\qquad
w_2:= 
\resizebox{7mm}{!}{\xymatrix{1\ar@{-}[drr]&&2\\1&&2}}\otimes
\resizebox{7mm}{!}{\xymatrix{1&&2\\1\ar@/^2pc/@{-}[rr]&&2}}
\qquad\qquad
w_3:= 
\resizebox{7mm}{!}{\xymatrix{1\ar@{-}[d]&&2\\1\ar@/^1pc/@{-}[rr]&&2}}\otimes
\resizebox{7mm}{!}{\xymatrix{1&&2\\1\ar@/^2pc/@{-}[rr]&&2}}
\end{displaymath}

It is easy to check that $v_1$, $v_2$ and $w_3$ span in 
$\Bbbk[\mathcal{L}_1^{(1)}]\otimes_\Bbbk 
\Bbbk[\mathcal{L}_0^{(1)}]$ a submodule isomorphic to 
$\Bbbk[\mathcal{L}_1^{(1)}]$. In the quotient, 
$w_1$ and $w_2$ span a submodule isomorphic to $\Bbbk[\mathcal{L}_2]$,
while $v_3$ only spans a submodule of $\Bbbk[\mathcal{L}_2]$, as the
latter has dimension two and we only have one vector left.


\noindent
V.~M.: Department of Mathematics, Uppsala University, Box. 480,
SE-75106, Uppsala, SWEDEN, email: {\tt mazor\symbol{64}math.uu.se}

\noindent
S.~S.: Department of Mathematics, Indian Institute of Technology, 
Dharwad, Karnataka, 580011, INDIA, email: {\tt maths.shraddha\symbol{64}gmail.com}

\end{document}